\newtheorem{thm}{Theorem}
\newtheorem{lemma}[thm]{Lemma}
\theoremstyle{definition}
\theoremstyle{remark}
\numberwithin{thm}{section}
\DeclareMathAlphabet{\mathsfsl}{OT1}{cmss}{m}{sl}
\renewcommand{\phi}{\varphi}
\newcommand{\Expect}{\operatorname{\mathbb{E}}}
\newcommand{\mtx}[1]{\bm{#1}}
\newcommand{\range}{\operatorname{range}}
\newcommand{\Proj}{\ensuremath{\mtx{\Pi}}}
\newcommand{\bx}{\mathbf{x}}
\newcommand{\ba}{\mathbf{a}}
\newcommand{\by}{\mathbf{y}}
\newcommand{\bc}{\mathbf{c}}
\newcommand{\bz}{\mathbf{z}}
\newcommand{\bD}{\mathbf{D}}
\newcommand{\bbC}{\mathcal{C}}
\newcommand{\bU}{\mathbf{U}}
\newcommand{\bw}{\mathbf{w}}
\def\reals{\mathbb{R}}
\def\bx{\mathbf{x}}
\def\be{\mathbf{e}}
\def\bu{\mathbf{u}}
\def\b0{\mathbf{0}}
\def\bU{\mathbf{U}}
\def\bC{\mathbf{C}}
\def\bv{\mathbf{v}}
\def\bA{\mathbf{A}}
\def\phase{\mathrm{phase}}
\def\bI{\mathbf{I}}
\def\Sp{\mathrm{Sp}}
\begin{document}
%
% paper title
% Titles are generally capitalized except for words such as a, an, and, as,
% at, but, by, for, in, nor, of, on, or, the, to and up, which are usually
% not capitalized unless they are the first or last word of the title.
% Linebreaks \\ can be used within to get better formatting as desired.
% Do not put math or special symbols in the title.
\title{Phase Retrieval by Alternating Minimization with Random Initialization}
%
%
% author names and IEEE memberships
% note positions of commas and nonbreaking spaces ( ~ ) LaTeX will not break
% a structure at a ~ so this keeps an author's name from being broken across
% two lines.
% use \thanks{} to gain access to the first footnote area
% a separate \thanks must be used for each paragraph as LaTeX2e's \thanks
% was not built to handle multiple paragraphs
%

\author{Teng Zhang
\thanks{T. Zhang was with the Department
of Mathematics, University of Central Florida, Orlando,
FL, 32765 USA e-mail: teng.zhang@ucf.edu.}% <-this % stops a space
%\thanks{J. Doe and J. Doe are with Anonymous University.}% <-this % stops a space
%\thanks{Manuscript received April 19, 2005; revised August 26, 2015.}
}

\maketitle

% As a general rule, do not put math, special symbols or citations
% in the abstract or keywords.
\begin{abstract}
We consider a phase retrieval problem, where the goal is to reconstruct a $n$-dimensional complex vector from its phaseless
scalar products with $m$ sensing vectors, independently sampled from complex normal distributions. We show that, with a random initialization, the classical algorithm of alternating minimization succeeds with high probability as $n,m\rightarrow\infty$ when ${m}/{\log^3m}\geq Mn^{3/2}\log^{1/2}n$ for some $M>0$. This is a step toward proving the conjecture in \cite{Waldspurger2016}, which conjectures that the algorithm succeeds when $m=O(n)$. The analysis depends on an
 approach that enables the decoupling of the dependency between the algorithmic iterates and the sensing vectors.
%ult is a partial %answer to the conjec%ture in \cite{Waldspurger2016}.% $m$, for some This result is a partial answer to the conjecture in \cite{Waldspurger2016}.
\end{abstract}

% Note that keywords are not normally used for peerreview papers.
%\begin{IEEEkeywords}
%IEEE, IEEEtran, journal, \LaTeX, paper, template.
%\end{IEEEkeywords}

% For peer review papers, you can put extra information on the cover
% page as needed:
% \ifCLASSOPTIONpeerreview
% \begin{center} \bfseries EDICS Category: 3-BBND \end{center}
% \fi
%
% For peerreview papers, this IEEEtran command inserts a page break and
% creates the second title. It will be ignored for other modes.
\IEEEpeerreviewmaketitle

\section{Introduction}
% The very first letter is a 2 line initial drop letter followed
% by the rest of the first word in caps.
%
% form to use if the first word consists of a single letter:
% \IEEEPARstart{A}{demo} file is ....
%
% form to use if you need the single drop letter followed by
% normal text (unknown if ever used by the IEEE):
% \IEEEPARstart{A}{}demo file is ....
%
% Some journals put the first two words in caps:
% \IEEEPARstart{T}{his demo} file is ....
%
% Here we have the typical use of a "T" for an initial drop letter
% and "HIS" in caps to complete the first word.
\IEEEPARstart{T}{his} article concerns the phase retrieval problem as follows: let $\bz\in\bbC^n$ be an unknown vector, and given $m$ known sensing vectors $\{\ba_i\}_{i=1}^m\in\bbC^n$, we have the observations
\[
y_i=|\ba_i^*\bz|, i=1,2,\cdots,m.
\]
Then can we reconstruct $\bz$ from the observations $\{y_i\}_{i=1}^m$? In this work, we assume that the sensing vectors $\{\ba_i\}_{i=1}^m$ are sampled from a complex normal distribution $CN(0,\bI)$. That is, both its real component and its imaginary component follows from a real Gaussian distribution of $N(0,\bI/2)$.

This problem is motivated from the applications in imaging science, and we refer interested readers to ~\cite{Shechtman2015,Candes7029630} for more
detailed discussions on the background in engineering and additional applications in other areas of sciences and engineering.

Because of the practical ubiquity of the phase retrieval problem, many algorithms and theoretical analysis have been developed for this problem. For example, an interesting recent approach is based on convex relaxation~\cite{Chai2011,Candes_PhaseLift,Waldspurger2015}, that replaces the non-convex measurements by convex measurements through relaxation. Since the associated optimization problem is convex, it has properties such as convergence to the global minimizer, and it has been shown that under some assumptions on the sensing vectors, this method recovers the correct $\bz$~\cite{Candes2014,Gross2015}. However, since these algorithms involve semidefinite programming for $n\times n$ positive semidefinite matrices, the computational cost could be prohibitive when $n$ is large. Recently, several works \cite{pmlr-v54-bahmani17a,Goldstein2016,Hand2016,Hand20162,NIPS2018_8082} proposed and analyzed an alternate convex method that uses linear programming instead of  semidefinite programming, which is more computationally efficient, but the program itself requires an ``anchor vector'', which needs to be a good approximate estimation of $\bz$.

Another line of works are based on Wirtinger flows, i.e., gradient flow in the complex setting~\cite{Candes7029630,NIPS2015_5743,Zhang:2016:PNP:3045390.3045499,NIPS2016_6319,cai2016,NIPS2016_6061,Soltanolkotabi2017,Chen2018}. Some theoretical justifications are also provided \cite{Candes7029630,Soltanolkotabi2017}. However, since the objective functions are nonconvex, many of these algorithms require careful initializations, which are usually only justified when the measurement vectors follow a very specific model. In addition, there are technical issues in implementation such as choosing step sizes, which makes the implementation slightly more complicated. In particular, most theoretical analysis simply assume sufficiently small step sizes, which does give a clear guidance to practice.

The most widely used method is perhaps the alternate minimization (Gerchberg-Saxton) algorithm and its variants~\cite{Gerchberg72,Fienup78,Fienup82}, that is based on alternating projections onto nonconvex sets~\cite{Bauschke03}. As a result, in some literature it is also called the alternating projection method \cite{Waldspurger2016}.  This method is very simple to implement and is parameter-free. However, since it is a nonconvex algorithm, its properties such as convergence are only partially known. Netrapalli et al. \cite{Netrapalli7130654} studied a resampled version of this algorithm and established its convergence as the number of measurements $m$ goes to infinity when the measurement vectors are independent standard complex normal vectors. Marchesini et al. \cite{Marchesini2016815} studied and demonstrated the necessary and sufficient conditions for the local convergence of this algorithm. Recently, Waldspurger \cite{Waldspurger2016} showed that when $m \geq Cn$ for sufficiently large $C$, the alternating minimization algorithm succeeds with high
probability, provided that the algorithm is carefully initialized. This work also conjectured that the alternate minimizations algorithm with random initialization succeeds with $m\geq Cn$.

One particular difficulty in the analysis of the alternating minimization algorithm is the stationary points. Currently, most papers on nonconvex algorithms depend on the analysis showing that all (attractive) stationary points of the algorithm are well-behaved in the sense that it is the desired solution, or close to the desired solution, for example, \cite{7541725}. Then the standard algorithm such as gradient descent algorithm or trust-region method can be applied to the problem to obtain the stationary point. However, as pointed out in \cite{Waldspurger2016},  in the regime $m = O(n)$, the alternating minimization algorithm has attractive stationary points that are not the desired solution. While empirically these undesired stationary points are not obstacles for the success of the algorithm since their attraction basins seem small, but it prevents us from applying the common approach of analyzing stationary points.

Recently,  \cite{Zhang2017} shows that the algorithm improves the correlation between the estimator and the truth in each iteration with high probability. Based on this observation, it shows that a resampled version of the alternating minimization algorithm converges to the solution with high probability when $m=O(n\log^5n)$. However, this approach can not be applied to analyze the alternating minimization algorithm directly, since the estimator at the $k$-th iteration is correlated with the sensing vectors. As a result, to analyze the non-resampled version,  one needs to find a way to decouple the estimator at the $k$-th iteration and the sensing vectors.

The contribution of this work is to show that the alternating minimization algorithm with random initialization succeeds  with high probability when $m/\log^3m>Mn^{1.5}\log^{0.5}n$. While it still does not match the conjecture of $m=O(n)$, it is the best result on the classic algorithm with random initialization and without any resampling or construction of a good initialization yet. Compared with \cite{Zhang2017}, the novelty in this analysis is the decoupling of the sensing vectors and the estimator at the $k$-th iteration. The approach fixes the first $k-1$ algorithmic iterates are fixed and analyzes the conditional distribution of the sensing vectors. This approach, inspired by the analysis of LASSO in \cite{6069859}, is the main technical contribution of this work. In spirit, this contribution is very similar to  leave-one-out approach that also enables decoupling in \cite{Chen2018}, and based on their leave-one-out approach, they show that an algorithm for the phase retrieval converges linearly. However, the analyzed algorithm is very different and their work assumes that the sensing vectors and the $\bz$ are real-valued. In addition, it seems more difficult to apply the leave-one-out approach here, as the iterations is a little bit more complicated. %Still, it would be an interesting direction to compare these two approaches of decoupling.

The paper is organized as follows. Section~\ref{sec:main} presents the algorithm and the main results of the paper, Theorem~\ref{thm:main}. The proof of Theorem~\ref{thm:main} is given in Section~\ref{sec:proof},  where the main proof of Theorem~\ref{thm:main} is given in Section~\ref{sec:proof_main}, the proof of the main lemmas are given in Section~\ref{sec:lemma}, and the auxiliary lemmas and their proofs are given in Section~\ref{sec:auxillary}. %???? %The proof of \eqref{eq:concentration2} is more complicated and we refer the reader to the supplementary material.

\subsection{Notations}
For any $z\in\bbC$, $|z|$ represents the modulus of $z$. We use $\Sp(\ba_1,\cdots,\ba_n)$ to represent the subspace spanned  by $\ba_1,\cdots,\ba_n$, i.e., the set $\{\bx: \bx+\sum_{i=1}^nc_i\ba_i,\,\,\,\text{for}\,\, c_1,\cdots,c_n\in\bbC\}$. Note that here the subspace is slightly different from the standard subspace, where the coefficient of each vector is a complex number.  We use $P_L$ to denote the projection onto the subspace $L$: $P_L(\bz)$ represents the nearest point on $L$ to $\bz$.

For any $z\in\bbC$, $\phase(z)=z/|z|$ is the phase of $z$. For any vector $\bz=(z_1,\cdots,z_m)$, $\phase(\bz)$ is the phases for each elements:
\[
\phase(\bz)=(\phase(z_1), \cdots, \phase(z_m)).
\]
We use $\odot$ to denote the pointwise product between the phase of the first vector and the modulus of the second vector. That is,
\[
(\bw\odot \by)_i=\frac{w_i}{|w_i|}|y_i|.
\]
For any vector $\bz\in\bbC^m$, $\|\bz\|$ represents its Euclidean norm: $\|\bz\|=\sqrt{\sum_{i=1}^m|z_i|^2}$, and its $1$-norm and $\infty$-norm are defined by $\|\bz\|_1=\sum_{i=1}^m|z_i|$ and $\|\bz\|_\infty=\max_{1\leq i\leq m}|z_i|$.

\subsection{Algorithm and Main result}\label{sec:main}
The alternating minimization method is one of the earliest methods that was introduced for phase retrieval problems~\cite{Gerchberg72,Fienup78,Fienup82}, and it is based on alternating projections onto nonconvex sets~\cite{Bauschke03}. Let  $\bA\in\bbC^{m\times n}$ be a matrix with columns given by $\ba_1,\ba_2,\cdots,\ba_m$, then its goal is to find a vector in $\bbC^m$ such that it lies in both the subspace $L=\range(\bA)\in\bbC^m$ and the set of correct amplitude $\mathcal{A}=\{\bw\in\bbC^m: |\bw_i|=y_i\}$. For this purpose, the algorithm  picks an initial guess $\bx^{(1)}$ in $\bbC^n$ and alternatively project $\bA\bx^{(1)}$ on the both sets.  Let $\bw^{(k)}=\bA\bx^{(k)}$ for all $k\geq 1$,  then the projections $P_L, P_\mathcal{A}: \bbC^m\rightarrow\bbC^m$ can be defined by
\[
P_L(\bw)=\bA(\bA^*\bA)^{-1}\bA^*\bw,\,\,\,[P_\mathcal{A}(\bw)]_i=y_i\frac{\bw_i}{|\bw_i|},
\]
and the alternating minimization algorithm is given by applying the operator $P_LP_{\mathcal{A}}$ recursively to the vector $\bw^{(1)}$, i.e., \begin{equation}\label{eq:alternating_algorithm}\bw^{(k+1)}=P_LP_{\mathcal{A}}\bw^{(k)}.\end{equation} Then the estimator of $\bx$ at the $k$-th iteration is obtained by solving $\bx^{(k)}=\bA\bw^{(k)}$.

This algorithm has been studied in \cite{Waldspurger2016} and Theorem 2 in \cite{Waldspurger2016}  shows the convergence of the algorithm if $m>Mn$ and if there is a good initialization. In addition, it conjectures that random initialization	also succeed in this setting.  In this article, we prove that this conjecture holds when $m/\log^2m>Mn^{1.5}\log^{0.5} n$ for some $M>0$. The rigorous statement is as follows:
\begin{thm}\label{thm:main}
Assuming that the sensing vectors $\{\ba_i\}_{i=1}^m$ are i.i.d. sampled from the complex normal distribution $CN(0,\bI)$, then there exists $M>0$ such that if ${m}/{\log^3m}\geq Mn^{3/2}\log^{1/2}n$, then the alternating projection algorithm with random initialization (obtained from a uniform distribution on the sphere of $\bbC^n$) succeeds almost surely in the sense that  as $n,m\rightarrow\infty$,
\begin{equation}\label{eq:main}
\Pr\left(\lim_{k\rightarrow\infty}\inf_{\psi\in \reals}\|e^{i\psi}\bx^{(k)}-\bz\|=0\right)\rightarrow 1.
\end{equation}
\end{thm}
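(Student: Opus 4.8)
\emph{Proof strategy.}
The plan is to track the overlap $\rho_k \defeq \absip{\bx^{(k)}}{\bz}/(\norm{\bx^{(k)}}\,\norm{\bz})$ between the current estimate and the truth and to show that it increases essentially monotonically to $1$, which is equivalent to \eqref{eq:main}. A random initial point on the sphere of $\bbC^n$ satisfies $\rho_1 \geq 1/(\sqrt{n}\,\mathrm{polylog}(n))$ with probability $1-o(1)$, since $\rho_1^2$ is $\mathrm{Beta}(1,n-1)$-distributed. There are then two regimes to handle: an \emph{escape} regime, in which $\rho_k$ must climb from $\approx n^{-1/2}$ up to a fixed constant $\eps_0$, and a \emph{convergence} regime, in which $\rho_k\geq\eps_0$ and one must show $\dist(\bx^{(k)},\bz)\to 0$.

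The driving mechanism in both regimes is a one-step improvement lemma for the \emph{decoupled} (resampled) dynamics. If $\bx^{(k)}$ were independent of the sensing vectors, then, writing out the update, $\bx^{(k+1)}=\big(\sum_{i}\ba_i\ba_i^*\big)^{-1}\sum_{i}\ba_i\, y_i\,\phase(\ba_i^*\bx^{(k)})$ with fresh complex Gaussians, and the resampled analysis of \cite{Zhang2017,Netrapalli7130654} yields, with high probability, $\rho_{k+1}\geq g(\rho_k)$ for an explicit increasing $g$ with $g(\rho)>\rho$ on $(0,1)$, $g(1)=1$, and $g(\rho)\geq(1+c)\rho$ for $\rho$ below a constant. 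Iterating $g$ gives the $O(\log n)$ escape time, and in the regime $\rho_k\geq\eps_0$ it gives a geometric contraction $1-\rho_{k+1}\leq(1-c')(1-\rho_k)$. So the entire proof reduces to establishing that the true, non-resampled iterate obeys the same recursion; note that the strict improvement $g(\rho)>\rho$ is precisely what lets the argument bypass the spurious attractive stationary points that obstruct the usual stationary-point analysis in this regime.

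Making that reduction rigorous is the crux, and it is where the loss from $m=O(n)$ to $m/\log^{3}m\gtrsim n^{3/2}\log^{1/2}n$ is incurred. Following the conditioning technique used for the LASSO in \cite{6069859}, the idea is to fix the first iterates $\bx^{(1)},\dots,\bx^{(k)}$ --- i.e., to condition on the $\sigma$-algebra $\mathscr{F}_k$ they generate, together with the amplitude/phase data $\{\ba_i^*\bz\}_i$ and $\{\bA\bx^{(j)}\}_{j\leq k}$ --- and to identify the resulting conditional law of $\bA$. Modulo the nonlinear data, which itself lies in $\mathscr{F}_k$, the constraints imposed on $\bA$ are linear, so conditionally $\bA$ is Gaussian restricted to an affine subspace. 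Decomposing each row as $\ba_i=\bP_{V}\ba_i+\bP_{V^\perp}\ba_i$ with $V=\Sp(\bz,\bx^{(1)},\dots,\bx^{(k)})$, $\dim V\leq k+1$, the component $\bP_V\ba_i$ carries every quantity the algorithm actually uses at step $k$, while $\bP_{V^\perp}\ba_i$ is, conditionally, a fresh complex Gaussian in $V^\perp$ with only a weak and explicitly bounded dependence across $i$. One then shows that the Gram matrix $\sum_i\ba_i\ba_i^*$ and the linear statistic $\sum_i\ba_i\, y_i\,\phase(\ba_i^*\bx^{(k)})$ concentrate around their decoupled values, so that, conditionally on $\mathscr{F}_k$, $\rho_{k+1}\geq g(\rho_k)$ with high probability; integrating over $\mathscr{F}_k$ and union-bounding over the $O(\log n)$ escape steps gives the unconditional claim.

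The main obstacle is precisely this decoupling step. The trouble is that $\bx^{(k)}$ is a genuinely global function of all $m$ sensing vectors --- through $(\bA^*\bA)^{-1}$ and through the nonlinear maps $\ba_i\mapsto\phase(\ba_i^*\bx^{(j)})$ and $\ba_i\mapsto|\ba_i^*\bz|$ --- so the conditional law of an individual $\ba_i$ given the history is neither a product law nor exactly Gaussian; one must quantify that each $\ba_i$ has only $O(1/m)$ leverage on the history, propagate that through the concentration estimates used in the resampled analysis (this is what costs the extra $\sqrt{n}$ and the logarithmic factors), and, because $\dim V=k+1$ grows with $k$, ensure that only $O(\log n)$ decoupled steps are ever needed. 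This last point is handled by the split into the two regimes: once $\dist(\bx^{(k)},\bz)$ is a small constant, the analysis switches to a deterministic local-convergence argument for alternating projections --- in the spirit of \cite{Waldspurger2016,Marchesini2016815}, valid on a single high-probability event that $\bA$ is well conditioned on all low-dimensional subspaces --- so that the tail $k\to\infty$ requires no further decoupling.
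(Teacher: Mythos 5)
Your high-level architecture matches the paper's: a random start has overlap of order $n^{-1/2}$, an $O(\log n)$-step amplification phase driven by a conditioning/decoupling argument in the spirit of \cite{6069859}, and a hand-off to the local convergence theorem of \cite{Waldspurger2016} once the overlap reaches a constant. But the proposal has a genuine gap exactly at the step you yourself call ``the crux'': the decoupling is described as a goal (``one must quantify that each $\ba_i$ has only $O(1/m)$ leverage on the history, propagate that through the concentration estimates''), not carried out. The assertion that, conditionally on the iterates, $\bP_{V^\perp}\ba_i$ is ``a fresh complex Gaussian in $V^\perp$ with only a weak and explicitly bounded dependence across $i$'' is precisely the statement whose proof constitutes the paper's main technical contribution, and in the row-wise picture it is far from immediate: fixing $\bx^{(j+1)}=(\bA^*\bA)^{-1}\bA^*(\by\odot\phase(\bA\bx^{(j)}))$ constrains the off-$V$ part of every row through $(\bA^*\bA)^{-1}$, so the conditional law of a single row is, as you note, neither product nor Gaussian, and the paper explicitly remarks that a leave-one-out/leverage-type argument ``seems more difficult to apply here.'' Without a precise characterization of that conditional law, the claimed one-step recursion $\rho_{k+1}\geq g(\rho_k)$ for the non-resampled iterates is unsupported, and the stated sample complexity $m/\log^3 m\gtrsim n^{3/2}\log^{1/2}n$ cannot be extracted from the sketch.

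The paper resolves this by changing coordinates before conditioning: it replaces $\bA$ by the projection onto the uniformly random $n$-dimensional subspace $L=\range(\bA)\subset\bbC^m$, runs Gram--Schmidt on the lifted iterates $\bw^{(k)}$ to produce $\bu_0,\dots,\bu_d$ with $d=O(\log n)$, and observes that conditioning on the algorithm's history constrains $L$ only through explicit linear facts ($\bu_i\in L$ and the residuals $[\bw^{(i)}\odot\by]-\bw^{(i+1)}\bw^{(i+1)*}[\bw^{(i)}\odot\by]\perp L$), so the conditional law of $L$ is exactly a uniformly random subspace of an explicit orthocomplement. Hence each new direction $\bu_k$ is exactly a uniformly random unit vector in a subspace of codimension $O(k)$ and couples to an i.i.d.\ $CN(0,\bI/m)$ vector $\bv_k$ up to error $O(\sqrt{n/m})$ (Lemma~\ref{lemma:approximate_basis}); an $\eps$-net over the coefficient vectors (size $(1+2m/n)^{2(d+1)}$) then makes the one-step estimates (Lemmas~\ref{lemma:pertubation}, \ref{lemma:iteration_v}, \ref{lemma:expectation}) uniform over the random coefficients, and the growth factor $f(c)>C_f>1$ together with control of the spurious coefficients $\max_{2\le j}|c_j^{(k)}|=O(\sqrt{n/m})$ yields $|c_0^{(d+1)}|>C_0$ before Lemma~\ref{lemma:reduce} takes over. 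If you want to salvage your row-wise version, you would need to supply the analogue of this exact conditional description (note your conditioning set $\{\ba_i^*\bz\}_i$, $\{\bA\bx^{(j)}\}_j$, plus the linear residual constraints $\bA^*\bq_j=0$ is essentially the same information, so the subspace formulation is arguably the cleaner place to do it), and also address how the growing dimension of $V$ enters the union bound, which your sketch leaves open.
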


In the proof, for simplicity when we talk about a ``random unit vector in $\bbC^m$/subspace $L$'' , we implicitly assume that it is sampled from the uniform distribution on the unit sphere in $\bbC^m$ or subspace $L$.  The constants $c, C$ are used to represent a constant that is independent of $m$ and $n$, and it is used to represent different constants in different equations. In addition, since the theorem focus on the setting when $n$ and $m$ both large, we write down inequalities under this assumption. For example, we may write $\log^3n<n$ even though it only holds for large $n$.

\section{Proof of Theorem~\ref{thm:main}}\label{sec:proof}
In the proof, we will first present reduced form of the statement of Theorem~\ref{thm:main} in Section~\ref{sec:equivalent}, and then present the proof of this reduced statement in Section~\ref{sec:proof_main}. The proof of the main lemmas are given in Section~\ref{sec:lemma}, and the auxiliary lemmas (which are mostly generic results on measure concentration) and their proofs are given in Section~\ref{sec:auxillary}.
\subsection{An Equivalent form of Theorem~\ref{thm:main}}\label{sec:equivalent}
In this section we first introduce a few assumptions on $\bA$ and some modification of the algorithm, which does not impact the performance of the algorithm but would simplify the proof later.

First, we investigate the performance of the same algorithm if the sensing matrix $\bA$, the underlying signal $\bz$ and the initialization $\bx^{(1)}$ are replaced by $\tilde{\bA}=\bA\bD$, $\tilde{\bz}=\bD^{-1}\bz$, and $\tilde{\bx}^{(1)}=\bD^{-1}{\bx}^{(1)}$ respectively, for some $\bD\in\bbC^{n\times n}$. Then $\bw^{(1)}$ and $\by$ are unchanged, and $\mathrm{range}(\tilde{\bA})=\mathrm{range}({\bA})$, which means that the updates in \eqref{eq:alternating_algorithm} is unchanged, and the estimators between these two settings have the connection of $\tilde{\bx}^{(k)}=\bD^{-1}\bx^{(k)}$. As a result, $\|e^{i\psi}\tilde{\bx}^{(k)}-\tilde{\bz}\|\rightarrow 0$  if and only if $\|e^{i\psi}\bx^{(k)}-\bz\|\rightarrow 0$.  For the rest of the proof, we will analyze an equivalent problem, where $\bD=(\bA^*\bA)^{-1/2}$ and $\bA$ is replaced with $\bA(\bA^*\bA)^{-1/2}$, the projection matrix to the subspace $L$.  %In fact, for the rest of the analysis, we will assume WLOG that $\bA$ is a projection matrix of size $m\times n$ onto $L$.

Second, WLOG we assume that $\|\bz\|=1$ (which implies that $\|\by\|=1$ because $\bA$ is a projection matrix) and we normalize $\bw$ in the update formula \eqref{eq:alternating_algorithm}:
\begin{equation}\label{eq:alternate_minimization}
\bw^{(k+1)}=\frac{P_LP_{\mathcal{A}}\bw^{(k)}}{\|P_LP_{\mathcal{A}}\bw^{(k)}\|}.
\end{equation}
Compared with the original form \eqref{eq:alternating_algorithm},  $\bw^{(k)}$ is normalized to a unit vector in each iteration. Since the operator $P_{\mathcal{A}}$ is invariant to the scaling, and $\bw^{(k+1)}$ depends on $\bw^{(k)}$ through $P_{\mathcal{A}}\bw^{(k)}$, the alternating minimization algorithm with normalization \eqref{eq:alternate_minimization} is equivalent to the standard version \eqref{eq:alternating_algorithm} with a ``correct'' scaling, and it is relatively straightforward to verify that Theorem~\ref{thm:main} holds for \eqref{eq:alternate_minimization} if and only if it holds for \eqref{eq:alternating_algorithm}.

Since $\{\ba_i\}_{i=1}^n$ are i.i.d. sampled from $CN(0,\bI_{m\times m})$, $L$ is a random $n$-dimensional subspace in $\bbC^m$.
%Therefore, we will work with random subspace $L$ instead of random vectors $\{\ba_i\}_{i=1}^n$ for the rest of the paper. Then, in terms of random subspace $L$,
Combining the analysis above, to prove Theorem~\ref{thm:main}, we will address the following equivalent problem:
\begin{itemize}
    \item Choose a unit vector $\bz\in\bbC^n$ and a random $n$-dimensional subspace $L$ in $\bbC^m$, and a random unit vector on $L$, denote it by $\bw^{(1)}$. Let $\by=|\Proj_L^*\bz|$, where $\Proj_L\in\bbC^{m\times n}$ represents a random projection matrix to $L$ (there are many choices of $\Proj_L$: for any unitary matrix $\bU\in\bbC^{n\times n}$,  $\Proj_L\bU$ is another projection matrix to $L$, and we randomly choose one).
    \item The iterative update formula is given by \begin{equation}\bw^{(k+1)}=\frac{P_{L}[\bw^{(k)}\odot \by]}{\|P_{L}[\bw^{(k)}\odot \by]\|},\label{eq:alternate_minimization2}\end{equation} and $\bx^{(k)}=\Proj_L^*\bw^{(k)}$.
    \item Goal: prove \eqref{eq:main}.
  \end{itemize}

\subsection{Main Proof}\label{sec:proof_main}
In the proof, we first define a set of orthogonal unit vectors in $\bbC^m$:
\begin{align*}
\bu_0&={\Proj_L^*\bz},\,\,\text{(note that $\|\bu_0\|=1$ since $\|\bz\|=1$)}\\
%\bu_1&=\frac{\bw^{(1)}-\bu_0\bu_0^*\bw^{(1)}}{\|\bw^{(1)}-\bu_0\bu_0^*\bw^{(1)}\|}\\
\bu_{k}&=\frac{\bw^{(k)}-\sum_{i=0}^{k-1}\bu_i\bu_i^*\bw^{(k)}}{\|\bw^{(k)}-\sum_{i=0}^{k-1}\bu_i\bu_i^*\bw^{(k)}\|},\,\,\text{for all $1\leq k\leq d$},
\end{align*}
where $d=C_d\log n$ with constant $C_d=\frac{1}{2\log(\frac{C_f+3}{4})}+1$. $C_f$ will be defined later in Lemma~\ref{lemma:expectation}, and it does not depend on $n$ or $m$.

Since $d<m$, $\{\bu_i\}_{i=0}^d$ is a set of $d+1$ orthogonal vectors in $\bbC^m$. By definition, $\bw^{(k)}\in \Sp(\bu_0,\bu_1,\cdots, \bu_{k})$ for any $1\leq k\leq d$ and $\bw^{(k)}$  can be written as $\bu_i$: \[
\bw^{(k)}=\sum_{i=0}^{k}c_i^{(k)}\bu_i.
\]
By writing $P_{L}[\bw^{(k)}\odot \by]$ in the basis of $\bu_0,\cdots,\bu_{k+1}$ as $P_{L}[\bw^{(k)}\odot \by]=\sum_{i=0}^{(k+1)}\tilde{c}_{k+1}^{(i)}\bu_i$, the update formula \eqref{eq:alternate_minimization2} can be then rewritten as the update of $\{c_i^{(k)}\}_{i=0}^k$ as follows:
\begin{align}\label{eq:alternate_minimization3}
\tilde{c}_i^{(k+1)}&=\bu_i^*\left[\left(\sum_{i=0}^{k}c_i^{(k)}\bu_i\right)\odot \bu_0\right],\,\,\,0\leq i\leq k\\\label{eq:alternate_minimization4}
\tilde{c}_{k+1}^{(k+1)}&%=\left\|P_{L}[\bw^{(k)}\odot \by]-\sum_{i=0}^k\tilde{c}_i^{(k+1)}\bu_i\right\|
=\left\|P_{L}\left[\left(\sum_{i=0}^{k}c_i^{(k)}\bu_i\right)\odot \bu_0-\sum_{i=0}^k\tilde{c}_i^{(k+1)}\bu_i\right]\right\|\\
c_i^{(k+1)}&=\frac{\tilde{c}_i^{(k+1)}}{\sqrt{\sum_{i=0}^{k+1}\tilde{c}_i^{(k+1)\,2}}},\,\,\,0\leq i\leq k+1.\label{eq:alternate_minimization5}
\end{align}
While \eqref{eq:alternate_minimization4} seems complicated, this explicit formula will not be used later in the proof. Instead, the estimations
\begin{equation}\label{eq:alternate_minimization6}
0\leq \tilde{c}_{k+1}^{(k+1)}\leq \sqrt{1-\sum_{i=0}^k|\tilde{c}_i^{(k+1)}|^2}
\end{equation}
and \eqref{eq::approximate_basis2} (will be presented later) are sufficient, where the second inequality of \eqref{eq:alternate_minimization6}
 follows from the fact that $\sum_{i=0}^{k+1}|\tilde{c}_i^{(k+1)}|^2=\|P_{L}[\bw^{(k)}\odot \by]\|^2\leq \|\bw^{(k)}\odot \by\|^2=\|\by\|^2=1$.

%Then the goal is to investigate how $\{{c}_{k}^{(i)}\}_{i=0}^k$ evolves over iterations. In particular, if ${c}_{k}^{(0)}=1$ then we have $\bw^{(k)}=\bu_0$, which implies that $\bx^{(k)}=\bz$.

The outline of the proof is as follows: first, we show that $\bu_i$ can be well approximately by random vectors $\bv_i$ from $CN(0,\bI/m)$ in Lemma~\ref{lemma:approximate_basis}. This step decouples the dependency between the sensing vectors and the estimations at the $k$-th iteration. Second, we investigate that the approximate dynamic of  $\{{c}_{k}^{(i)}\}_{i=0}^k$ defined in \eqref{eq:alternate_minimization3} - \eqref{eq:alternate_minimization5}, by replacing $\bu_i$  with $\bv_i$ in Lemma~\ref{lemma:iteration_v} and~\ref{lemma:expectation}. Third, we obtain the dynamic of $\{{c}_{k}^{(i)}\}_{i=0}^k$ from applying a perturbation result in Lemma~\ref{lemma:pertubation} to the dynamic we obtained in the second step. Finally, we prove that at the $d$-th iteration, the estimation is already sufficiently good, and Lemma~\ref{lemma:reduce}, a variant of  \cite[Theorem 2]{Waldspurger2016}, will be used to prove that the algorithm succeeds.

\begin{lemma}\label{lemma:approximate_basis}
There exists $\{\bv_i\}_{i=0}^d$ such that ${\bv}_i$ are i.i.d. sampled from $CN(0,\bI/m)$, $\bu_0=\bv_0/\|\bv_0\|$, and
\begin{align}\label{eq::approximate_basis1}&\text{$\Pr\left(\|\bu_k-{\bv}_k\|>{\frac{\log m}{\sqrt{m}}}\right)<C\exp(-C\log^2 m)$ for $k=0,1$}\\ &\text{$\Pr\left(\|\bu_k-{\bv}_k\|>2\sqrt{\frac{n}{m}}\right)<C\exp(-Cn)$ for $2\leq k\leq  d$}
\label{eq::approximate_basis2}
\\ &\text{$\Pr\left(|\tilde{c}_{k}^{(k)}|>2\sqrt{\frac{n}{m}}\right)<C\exp(-Cn)$ for $1\leq k\leq  d$}.\label{eq::approximate_basis3}
\end{align}
In addition, we have the following properties:
\begin{equation}\label{eq:additional_assumptions}
\|\bv_i\|\leq 2, \|\bv_i\|_\infty\leq \frac{\log m}{\sqrt{m}}, \,\,\text{for all $0\leq i\leq d$}
\end{equation}
holds with probability $1-2m(d+1)\exp(-\log^2m)-2m(d+1)\exp(-\log^2m/4)$.
\end{lemma}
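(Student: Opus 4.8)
The plan is to build the coupling by a \emph{sequential revelation} of the randomness in the pair $(L,\bw^{(1)})$, using the structural fact that the algorithmic state after $k$ steps is confined to the low-dimensional subspace $U_k\defeq\Sp(\bu_0,\dots,\bu_k)$, so that the part of $L$ transverse to $U_k$ is still ``fresh''. Writing $\bb^{(j)}\defeq\bw^{(j)}\odot\by$ and splitting $\bb^{(j)}=P_{U_j}\bb^{(j)}+\bb^{(j)}_\perp$ with $\bb^{(j)}_\perp\in U_j^\perp$, one reads off from \eqref{eq:alternate_minimization2} that $P_L\bb^{(j)}=P_{U_j}\bb^{(j)}+P_{L_j}\bb^{(j)}_\perp$, where $L_j\defeq L\cap U_j^\perp$, and hence
\[
\bu_{k}=\frac{P_{L_{k-1}}\bb^{(k-1)}_\perp}{\|P_{L_{k-1}}\bb^{(k-1)}_\perp\|},\qquad \tilde c_{k}^{(k)}=\|P_{L_{k-1}}\bb^{(k-1)}_\perp\| \qquad (k\ge 2),
\]
and for $k=1$ the same formulas hold with $\bb^{(0)}_\perp$ replaced by $P_{\bu_0^\perp}\bw^{(1)}$. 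Thus the only new randomness needed to pass from step $k-1$ to step $k$ is the projection of one vector --- fixed once the past is known --- onto $L_{k-1}$.

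The first and central task is the decoupling claim: conditionally on everything revealed through step $k-1$, equivalently on $\bu_0,\dots,\bu_{k-1}$ together with the scalars $\tilde c_1^{(1)},\dots,\tilde c_{k-1}^{(k-1)}$ and $\bu_0^*\bw^{(1)}$ (which determine $\bw^{(k-1)}$, hence $\bb^{(k-1)}_\perp$), the subspace $L_{k-1}$ is Haar-uniform among $(n-k)$-dimensional subspaces of $U_{k-1}^\perp$. I would prove this by induction on $k$ from the orthogonal invariance of the Haar law together with the standard fact that, for a Haar-uniform $r$-dimensional subspace $V\subseteq\bbC^p$ and a fixed unit vector $\bb$, $\|P_V\bb\|^2\sim\mathrm{Beta}(r,p-r)$, the direction $P_V\bb/\|P_V\bb\|$ equals $\|P_V\bb\|\,\bb+\sqrt{1-\|P_V\bb\|^2}\,\bs$ with $\bs$ uniform on the unit sphere of $\bb^\perp$ and independent of $\|P_V\bb\|$, and, conditionally on $P_V\bb$, the subspace $V\cap(P_V\bb)^\perp$ is Haar-uniform in $(P_V\bb)^\perp$. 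Applied with $V=L_{k-1}$ this both describes how $\bu_k$ and $\tilde c_k^{(k)}$ are generated and shows $L_k=L_{k-1}\cap\bu_k^\perp$ stays Haar-uniform in $U_k^\perp$, closing the induction. For the base cases, $\bu_0$ is uniform on the unit sphere of $\bbC^m$ by construction, and $\bu_1$ --- the normalized $\bu_0^\perp$-component of the uniform-in-$L$ vector $\bw^{(1)}$ --- is, after integrating out $L_0$, uniform on the unit sphere of $\bu_0^\perp$ with \emph{no} preferred direction; this absence of a bias direction is precisely what produces the sharper rate in \eqref{eq::approximate_basis1}.

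With the decoupling in hand the coupling is defined step by step. Take $\bv_0\defeq R_0\bu_0$ with $R_0$ an independently sampled copy of $\|\bg\|$, $\bg\sim CN(0,\bI/m)$, so $\bu_0=\bv_0/\|\bv_0\|$. For $k\ge 1$, using the orthogonal decomposition of a $CN(0,\bI/m)$ vector with respect to the splitting $\bbC^m=U_{k-1}\oplus\Sp(\hat\bb^{(k-1)}_\perp)\oplus(\cdot)$ (for $k=1$, $\bbC^m=\Sp(\bu_0)\oplus\bu_0^\perp$), I set $\bv_k$ to be the vector obtained by placing freshly sampled $CN(0,\bI/m)$ mass on $U_{k-1}$, freshly sampled $CN(0,1/m)$ mass along $\hat\bb^{(k-1)}_\perp$, and a freshly sampled Gaussian radius times the sphere-direction $\bs$ appearing in $\bu_k$ (which, by the fact quoted above, is exactly uniform on the relevant sphere and independent of the magnitudes). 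Conditionally on the past the law of $\bv_k$ is identically $CN(0,\bI/m)$ --- it is a rotation-invariant distribution realized through a past-measurable frame plus independent fresh randomness --- hence $\bv_k$ is independent of the past, and therefore $\{\bv_k\}_{k=0}^d$ are i.i.d.\ $CN(0,\bI/m)$.

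Finally the quantitative bounds are routine concentration. For $k\in\{0,1\}$, $\|\bu_k-\bv_k\|$ equals $|1-R_0|$ or $\big(|1-R_1|^2+|\zeta_1|^2\big)^{1/2}$ with $R_k$ a Gaussian norm and $\zeta_1\sim CN(0,1/m)$, and the $C\exp(-C\log^2m)$ tail in \eqref{eq::approximate_basis1} follows from standard $\chi^2$- and $CN(0,1/m)$-tail bounds at scale $\log m/\sqrt m$. For $2\le k\le d$, $\|\bu_k-\bv_k\|$ is dominated by the bias $\|P_{L_{k-1}}\hat\bb^{(k-1)}_\perp\|\le\sqrt{\mathrm{Beta}(n-k,m-n)}$, plus $|1-R_k|$, plus the norm of the $U_{k-1}$-component of $\bv_k$ (a $CN(0,\bI/m)$ vector in $\bbC^{k}$ with $k\le d$); Beta- and $\chi^2$-concentration give the $C\exp(-Cn)$ tail in \eqref{eq::approximate_basis2}, and since $\|\bb^{(k-1)}_\perp\|\le\|\by\|=1$ the same estimate yields $\tilde c_k^{(k)}\le2\sqrt{n/m}$ with probability $1-C\exp(-Cn)$, which is \eqref{eq::approximate_basis3}. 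The bounds \eqref{eq:additional_assumptions} follow from a union bound over the $m(d+1)$ entries using the $CN(0,1/m)$ tail and over the $d+1$ vectors using concentration of $\|\bv_i\|^2$. The principal obstacle is the second paragraph: making the ``fresh transverse subspace'' claim rigorous requires a careful disintegration --- conditioning on the \emph{values} of the adaptively chosen Gram--Schmidt vectors, not merely on the subspaces they span --- together with disciplined bookkeeping of which quantities are measurable with respect to the revealed history.
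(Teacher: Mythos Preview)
Your overall strategy --- sequential revelation of $L$, coupling each $\bu_k$ to a fresh $CN(0,\bI/m)$ vector, then concentration --- is exactly the paper's. But the central decoupling claim, as you state it, is wrong, and this is precisely the ``principal obstacle'' you flag in the last sentence.

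The error is in the quoted fact: ``conditionally on $P_V\bb$, the subspace $V\cap(P_V\bb)^\perp$ is Haar-uniform in $(P_V\bb)^\perp$.'' Conditioning on $P_V\bb=\bp$ imposes \emph{two} linear constraints on $V$: it contains $\bp$, and it is orthogonal to the residual $\bb-\bp$. Hence $V\cap\bp^\perp$ is Haar-uniform not in $\bp^\perp$ (dimension $p-1$) but in $\bp^\perp\cap(\bb-\bp)^\perp$ (dimension $p-2$). Carried through your induction, this means that $L_{k-1}$ is \emph{not} Haar in $U_{k-1}^\perp$: at each step the ambient space loses one additional direction, namely the residual $\bb^{(j)}_\perp-P_{L_j}\bb^{(j)}_\perp$. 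The paper handles this correctly by working in the space $R_k$ of dimension $m-2k+2$, orthogonal to both $\{\bu_i\}_{i=0}^{k-1}$ and the residuals $\{[\bw^{(i)}\odot\by]-\bw^{(i+1)}\bw^{(i+1)*}[\bw^{(i)}\odot\by]\}_{i=1}^{k-2}$. Your coupling construction then breaks: the sphere-direction $\bs$ in $\bu_k$ is uniform only on the sphere of this smaller $(m-2k+1)$-dimensional space, so to make $\bv_k$ genuinely $CN(0,\bI/m)$ you must also inject fresh Gaussian mass along the $k-2$ residual directions, not just along $U_{k-1}$ and $\hat\bb^{(k-1)}_\perp$.

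The fix is exactly what the paper does: track the residuals explicitly and let $\bv_k$ be a full $CN(0,\bI/m)$ vector with $\bv_k'=P_{R_k'}\bv_k/\|P_{R_k'}\bv_k\|$. Since only $O(d)=O(\log n)$ extra directions are excluded, your quantitative bounds survive essentially unchanged once the bookkeeping is corrected; but as written the independence claim for $\{\bv_k\}$ is false.
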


\begin{lemma}\label{lemma:pertubation}
For any  $\bx\in\bbC^m$ and $\bx\sim CN(0,\bI_{m\times m}/m)$,  with probability at least $1-m\exp(-n/6)$, we have
\[
\frac{1}{m}\|\phase(\bx+\by)-\phase(\bx)\|_1\leq C\log m \max\left(\|\by\|,\frac{n}{m}\right)
\]
\end{lemma}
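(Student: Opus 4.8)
The plan is to reduce the statement to a pointwise estimate on the phase map and then control the resulting sum through the order statistics of the coordinates $|x_i|$. (I read the hypothesis as ``$\by\in\bbC^m$ is fixed and $\bx\sim CN(0,\bI/m)$''; note that almost surely no coordinate of $\bx+\by$ vanishes.) First I would record the elementary bound
\[
\bigl|\phase(a+b)-\phase(a)\bigr|\ \le\ \min\!\Bigl(2,\ \tfrac{4|b|}{|a|}\Bigr),\qquad a\in\bbC\setminus\{0\},
\]
which holds because the difference of two unit vectors has modulus $\le 2$, while $\phase(a+b)-\phase(a)=\bigl(|a|(a+b)-|a+b|\,a\bigr)/(|a|\,|a+b|)$ has numerator of modulus $\le 2|a||b|$ (using $\bigl||a|-|a+b|\bigr|\le|b|$), and when $|b|\le|a|/2$ the denominator is $\ge|a|^2/2$ (the case $|b|>|a|/2$ being covered by the bound $2$). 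Applying this coordinatewise, $\|\phase(\bx+\by)-\phase(\bx)\|_1\le\sum_{i=1}^m\min(2,4|y_i|/r_i)$ with $r_i:=|x_i|$, so it suffices to show that with probability $\ge 1-m\exp(-n/6)$,
\[
\sum_{i=1}^m\min\!\Bigl(2,\ \tfrac{4|y_i|}{r_i}\Bigr)\ \le\ C\,m\log m\,\max\!\Bigl(\|\by\|,\tfrac{n}{m}\Bigr).
\]

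The core step is a uniform lower bound on the order statistics $r_{(1)}\le\cdots\le r_{(m)}$ of $r_1,\dots,r_m$. Since $x_i\sim CN(0,1/m)$ we have $r_i^2\sim\mathrm{Exp}(m)$, hence $\Pr(r_i<t)=1-e^{-mt^2}\le mt^2$; a binomial tail then gives, for each $k$,
\[
\Pr\!\Bigl(r_{(k)}<\tfrac{\sqrt{k}}{em}\Bigr)=\Pr\!\Bigl(\#\{i:r_i<\tfrac{\sqrt{k}}{em}\}\ge k\Bigr)\ \le\ \binom{m}{k}\Bigl(\tfrac{k}{e^2m}\Bigr)^{k}\ \le\ e^{-k}.
\]
Taking $k_0=\lceil n\rceil$ and a union bound over $k_0\le k\le m$, the event $\mathcal E=\{\,r_{(k)}\ge \sqrt{k}/(em)\ \text{for all }k\ge k_0\,\}$ satisfies $\Pr(\mathcal E^{c})\le\sum_{k\ge k_0}e^{-k}<2e^{-n}\le m\exp(-n/6)$.

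It then remains to bound the sum on $\mathcal E$. Let $\sigma$ be a permutation with $r_{\sigma(j)}=r_{(j)}$. For the low ranks $j<k_0$ I would use $\min(2,\cdot)\le 2$, contributing at most $2k_0\le 4n$, which lies within the $C\log m\cdot m\cdot(n/m)$ budget. For the high ranks $j\ge k_0$, on $\mathcal E$ we have $\min(2,4|y_{\sigma(j)}|/r_{(j)})\le 4|y_{\sigma(j)}|/r_{(j)}\le 4em\,|y_{\sigma(j)}|/\sqrt{j}$, and Cauchy--Schwarz gives
\[
\sum_{j=k_0}^{m}\frac{|y_{\sigma(j)}|}{\sqrt{j}}\ \le\ \Bigl(\sum_{j=k_0}^{m}|y_{\sigma(j)}|^2\Bigr)^{1/2}\Bigl(\sum_{j=1}^{m}\frac{1}{j}\Bigr)^{1/2}\ \le\ \|\by\|\,\sqrt{1+\log m}.
\]
Hence on $\mathcal E$ the sum is at most $4n+4em\,\|\by\|\sqrt{1+\log m}$, and dividing by $m$ yields $\tfrac{4n}{m}+4e\sqrt{1+\log m}\,\|\by\|\le C\log m\,\max(\|\by\|,n/m)$, which proves the lemma (in fact with $\sqrt{\log m}$ in place of $\log m$).

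I expect the order-statistics estimate to be the crux. The naive alternatives --- thresholding the $r_i$ at a single fixed level, or applying Bernstein's inequality to $\sum 4|y_i|/r_i$ --- lose a polynomial factor in $m/n$, because they charge every coordinate with a small-but-not-tiny $r_i$ at the worst rate $1/\tau$; exploiting the full profile $r_{(k)}\gtrsim\sqrt{k}/m$ and pairing it against $\by$ by Cauchy--Schwarz is what keeps the constant logarithmic. The binomial tail $\binom{m}{k}(mt^2)^{k}$ together with the union bound over $k\ge n$ is precisely what produces the $\exp(-n/6)$-type failure probability, with room to spare.
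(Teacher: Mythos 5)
Your proof is correct and follows essentially the same route as the paper's: a pointwise bound $|\phase(a+b)-\phase(a)|\le\min\bigl(2,\,C|b|/|a|\bigr)$ (the paper's Lemma~\ref{lemma:pertubation_uni}), a high-probability lower bound $r_{(k)}\gtrsim\sqrt{k}/m$ on the order statistics of $|x_i|$ for ranks $k\ge n$, a trivial bound on the lowest ranks, and Cauchy--Schwarz against $\by$ on the remaining ranks (the paper's Lemmas~\ref{lemma:pertubation_sum} and~\ref{lemma:order}). The only differences are technical refinements: you obtain the order-statistics bound from a direct binomial tail rather than a Chernoff bound on the count of small coordinates, and you apply a single Cauchy--Schwarz against the harmonic sum instead of dyadic blocks, which even sharpens the paper's $\log m$ factor to $\sqrt{\log m}$ and improves the failure probability.
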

\begin{lemma}\label{lemma:iteration_v}
For any  $\bx\in\bbC^m$ defined by $\bx=\sum_{i=0}^{d}c_i\bv_i$, where ${\sum_{i=0}^{d}c_i^2}=1$ and $\bv_i\sim CN(0,\bI/m)$ for all $0\leq i\leq d$. Define $f,g:\reals\rightarrow\reals$ by
\[f(c)=\frac{1}{c}\Expect_{x_0,x_1\sim CN(0,1)}\frac{cx_0+\sqrt{1-c^2}x_1}{|cx_0+\sqrt{1-c^2}x_1|}|x_0|x_0^*\] and \[g(c)=\frac{1}{\sqrt{1-c^2}}\Expect_{x_0,x_1\sim CN(0,1)} \frac{cx_0+\sqrt{1-c^2}x_1}{|cx_0+\sqrt{1-c^2}x_1|}|x_0|x_1^*,\]
  then \begin{align*}\Pr\left(\!\!|\bv_0^*[\bx\odot \bv_0] \!\!-\!\! f(|c_0|)c_0|\!\!<\frac{\log^2 m}{\sqrt{m}}\!\!\right)&>\!\!1-\exp(-C\log^4m),\end{align*}
  and for any $1\leq j\leq d$,
  \begin{align*}
\Pr\left(\left|\bv_j^*[\bx\odot \bv_0] - g(|c_0|){c_j}\right|<\sqrt{\frac{n}{{m}}}\right)&>1-\exp(-Cn).
\end{align*}
\end{lemma}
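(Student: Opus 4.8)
The plan is to view each of the two quantities as a sum of $m$ independent, identically distributed contributions — one for each coordinate of $\bbC^m$ — compute its expectation exactly (this is precisely what produces $f$ and $g$), and then apply a Bernstein-type tail bound for sums of independent sub-exponential random variables.

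Coordinatewise, $[\bx\odot\bv_0]_i=\phase(x_i)\,|v_{0,i}|$ with $x_i=\sum_{k=0}^{d}c_k v_{k,i}$, so that $\bv_0^*[\bx\odot\bv_0]=\sum_{i=1}^m \overline{v_{0,i}}\,|v_{0,i}|\,\phase(x_i)$ and $\bv_j^*[\bx\odot\bv_0]=\sum_{i=1}^m \overline{v_{j,i}}\,|v_{0,i}|\,\phase(x_i)$. Distinct coordinates $i$ involve disjoint blocks of the independent scalars $\{v_{k,i}\}_{k=0}^d$, so the summands are i.i.d. in $i$. Rescaling each $v_{k,i}$ to a standard $CN(0,1)$ variable and writing $x_i=c_0 v_{0,i}+\tilde x_i$ with $\tilde x_i:=\sum_{k\ge1}c_k v_{k,i}$ independent of $v_{0,i}$ and distributed $CN(0,(1-|c_0|^2)/m)$, I would use the rotational invariance of $CN(0,1)$ to absorb $\phase(c_0)$ and obtain $\Expect\big[\overline{v_{0,i}}\,|v_{0,i}|\,\phase(x_i)\big]=\tfrac1m f(|c_0|)c_0$; summing over $i$ gives mean exactly $f(|c_0|)c_0$. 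For $\bv_j^*[\bx\odot\bv_0]$ the only new ingredient is that the test coordinate is $v_{j,i}$ rather than the residual $\tilde x_i$, which I would handle with the Gaussian conditional identity $\Expect[\overline{v_{j,i}}\mid \tilde x_i]=\tfrac{c_j}{1-|c_0|^2}\,\overline{\tilde x_i}$; after substituting this and again absorbing $\phase(c_0)$, the per-coordinate mean becomes $\tfrac1m g(|c_0|)c_j$, hence mean $g(|c_0|)c_j$ overall. Keeping the complex phases and the normalization $\sum c_k^2=1$ straight in this computation, and checking that $f,g$ are finite on the relevant range of $|c_0|$ (bounded away from $0$ and $1$), is the part that needs the most care.

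For the deviation bounds, each summand is bounded in modulus by $|v_{0,i}|^2$ (respectively $|v_{0,i}|\,|v_{j,i}|$), a sub-exponential random variable with parameter $O(1/m)$, so Bernstein's inequality for a sum of $m$ independent such terms gives deviation $t$ with failure probability $2\exp(-c\min(mt^2,mt))$ for an absolute $c>0$. Taking $t=\log^2 m/\sqrt m$ yields $\min(mt^2,mt)=\log^4 m$, and $t=\sqrt{n/m}$ yields $\min(mt^2,mt)=n$ because $m\gg n$ in our regime; after absorbing constants these are exactly the two claimed estimates. Alternatively one may first intersect with the event $\|\bv_i\|_\infty\le\log m/\sqrt m$ from Lemma~\ref{lemma:approximate_basis} and apply a bounded-difference inequality instead, with identical arithmetic. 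I expect the main obstacle to be the exact mean identification in the $\bv_j$-case — in particular the Gaussian-conditioning step that converts the $v_{j,i}$ weight into the residual weight appearing in the definition of $g$ — rather than the concentration step, which is routine.
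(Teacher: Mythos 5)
Your proposal is correct and follows essentially the same route as the paper: the mean identities are obtained exactly as in the paper's proof (your coordinatewise Gaussian-conditioning identity $\Expect[\overline{v_{j,i}}\mid \tilde x_i]=\tfrac{c_j}{1-|c_0|^2}\overline{\tilde x_i}$ is just a per-coordinate restatement of the paper's decomposition $\bx=c_0\bv_0+\sqrt{1-|c_0|^2}\bv'$ and the correlation $\tfrac{c_j}{\sqrt{1-|c_0|^2}}$ between $\bv_j$ and $\bv'$), and the tail bounds come from concentration of a sum of $m$ independent per-coordinate terms. Your use of a sub-exponential Bernstein bound is in fact slightly more careful than the paper's appeal to the sub-Gaussian lemma (the summands are products of sub-Gaussians), and at the deviation scales $\log^2 m/\sqrt m$ and $\sqrt{n/m}$ it yields the same exponents, so the arithmetic matches the claimed probabilities.
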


\begin{lemma}\label{lemma:expectation}
Given any $0<C_0<1$, there exists $C_f>1, C_g>0$ depending on $C_0$ such that
\begin{equation}\label{eq:expectation1}
\min_{0<c<C_0}f(c)>C_f,  \min_{0<c<C_0}g(c)>C_g.
\end{equation}
In addition, we have
\begin{equation}\label{eq:expectation2}
\frac{f(c)}{g(c)}\geq 1,\,\,\text{for all $0<c<C_0$.}
\end{equation}
\end{lemma}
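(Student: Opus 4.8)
The plan is to strip the bivariate complex Gaussian expectations defining $f$ and $g$ down to one-variable real integrals using only rotation invariance of $CN(0,1)$, after which the sole quantitative content is the bound $f>1$.

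\emph{Step 1 (reduction).} Conditioning on $x_0$ and writing $x_1=\phase(x_0)\,v$ with $v\sim CN(0,1)$ fresh, one gets $cx_0+\sqrt{1-c^2}x_1=\sqrt{1-c^2}\,\phase(x_0)\,(\beta r+v)$ with $r:=|x_0|$ and $\beta=\beta(c):=c/\sqrt{1-c^2}$ (an increasing bijection of $(0,1)$ onto $(0,\infty)$; $c=\beta/\sqrt{1+\beta^2}$), so that
\[
f(c)=\frac1c\,\Expect_r\big[r^2\,\Psi(\beta r)\big],\qquad g(c)=\sqrt{1+\beta^2}\;\Expect_r\big[r\,\Lambda(\beta r)\big],
\]
where $r$ has density $2r\econst^{-r^2}$ on $\Rplus$ and $\Psi(t):=\Expect_v[\phase(t+v)]$, $\Lambda(t):=\Expect_v[\phase(t+v)\,v^*]$ are real (imaginary parts cancel by the $v\mapsto v^*$ symmetry), increasing on $[0,\infty)$, with $\Psi(0)=0$, $\Psi(+\infty)=\Lambda(+\infty)=1$, and $\Lambda(0)=\Expect|v|=\tfrac{\sqrt\pi}{2}$. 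I would also record the dual representation from conditioning on $w$: if $(w,w')$ is the unitary rotation of $(x_0,x_1)$ with $w=cx_0+\sqrt{1-c^2}x_1$ and $w'=-\sqrt{1-c^2}x_0+cx_1$, then $w,w'$ are i.i.d.\ $CN(0,1)$, $x_0=cw-\sqrt{1-c^2}w'$, and a short computation gives
\[
f(c)=A-\tfrac{\sqrt{1-c^2}}{c}B,\qquad g(c)=A+\tfrac{c}{\sqrt{1-c^2}}B,
\]
with $A:=\Expect[|w|\,|x_0|]>0$ and $B:=\Expect[\phase(w)\,|x_0|\,(w')^*]$, whence $f(c)-g(c)=-B/(c\sqrt{1-c^2})$.

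\emph{Step 2 (the soft claims).} From the formulas $f$ and $g$ are continuous on $(0,1)$, and dominated convergence extends them continuously to $c=0$ with $g(0)=\Lambda(0)\Expect[r]=\tfrac{\sqrt\pi}{2}\cdot\tfrac{\sqrt\pi}{2}=\tfrac\pi4$ and $f(0)=\Psi'(0)\Expect[r^3]=\tfrac{\sqrt\pi}{2}\cdot\tfrac{3\sqrt\pi}{4}=\tfrac{3\pi}{8}$ (using $\Psi'(0)=\tfrac12\Expect[1/|v|]=\tfrac{\sqrt\pi}{2}$ and $\Expect[r^3]=\Gamma(5/2)=\tfrac{3\sqrt\pi}{4}$). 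The positivity claims are Gaussian integration by parts (Stein's lemma): it gives $\Lambda(t)>0$ for all $t\ge0$, hence $g(c)>0$ on $(0,1)$; and, after conditioning on $w=\rho\,\phase(w)$ and substituting $w'=\phase(w)u$ so that $B=\Expect_{\rho,u}[|c\rho-\sqrt{1-c^2}u|\,u^*]$, it gives $B\le0$, since the inner expectation equals $-\tfrac{\sqrt{1-c^2}}{2}\Expect_u[\real\,\phase(c\rho-\sqrt{1-c^2}u)]\le0$ (the mean phase of a complex Gaussian with nonnegative-real mean has nonnegative real part). Hence $f(c)\ge g(c)>0$ on $(0,1)$, which gives $f(c)/g(c)\ge1$ there, i.e.\ \eqref{eq:expectation2}; and by compactness $\min_{c\in[0,C_0]}g(c)>0$, so one may take $C_g:=\tfrac12\min_{[0,C_0]}g$.

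\emph{Step 3 (the main point: $f>1$).} It remains to show $f(c)>1$ for every $c\in[0,C_0]$; with continuity this yields $\min_{0<c<C_0}f>1$, and $C_f:=\tfrac12(1+\min_{[0,C_0]}f)$ works, proving \eqref{eq:expectation1}. By Step~1 and $c=\beta/\sqrt{1+\beta^2}$ this is exactly the one-variable inequality
\[
\Expect_r\big[r^2\,\Psi(\beta r)\big]\;>\;\frac{\beta}{\sqrt{1+\beta^2}}\;=\;c\,\Expect_r[r^2]\qquad\text{for }0<\beta<\beta(C_0).
\]
This is genuinely an averaging statement — the pointwise inequality $\Psi(\beta r)>c$ fails for small $r$ — whose content is that the weight $r^2\cdot 2r\econst^{-r^2}$ puts enough mass at large $r$, where the increasing function $\Psi$ is close to $1$. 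I see two routes. (i) Insert the closed form $\Psi(t)=\tfrac{\sqrt\pi}{2}\,t\,\econst^{-t^2/2}\big[I_0(t^2/2)+I_1(t^2/2)\big]$ (from evaluating $\Expect_v[\real\,\phase(t+v)]$ in polar coordinates) and reduce the claim to a comparison of explicit integrals of modified Bessel functions. (ii) Show that the graph of $f$ on $[0,1]$ lies above the chord joining $(0,f(0))=(0,\tfrac{3\pi}{8})$ and $(1,f(1))=(1,1)$; since $\tfrac{3\pi}{8}>1$ this chord is $>1$ on $[0,1)$, forcing $f(c)>1$ there — and it would suffice, e.g., to prove $f$ is concave on $[0,1]$, or to establish a sharp enough algebraic lower bound on $\Psi$ and estimate the resulting Gaussian integral. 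Establishing this lower bound on $f$ is the only real obstacle; everything else is continuity, compactness, and the two sign computations above.
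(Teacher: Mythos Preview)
Your Steps 1 and 2 are correct and in fact cleaner than the paper's argument. The paper does not reduce to the one-variable functions $\Psi,\Lambda$; instead it establishes Lipschitz continuity of $f_0=cf$, $f_0'$, and $g_0=\sqrt{1-c^2}\,g$ directly from the bivariate expectations, and then verifies \eqref{eq:expectation1} and \eqref{eq:expectation2} by evaluating $f$ and $g$ on a sufficiently fine finite grid (a computer-assisted step, backed by Figure~1), together with the endpoint estimate $f_0'(0)=f(0)>1$ to handle $c$ near $0$. In particular, the paper obtains \eqref{eq:expectation2} only indirectly, by checking numerically that $g<1$ on $(0,C_0)$ and combining with $f>1$; your rotation identity $f-g=-B/(c\sqrt{1-c^2})$ together with the Stein computation showing $B\le 0$ yields $f\ge g$ analytically on all of $(0,1)$, which is a genuine improvement. (One harmless slip: $\Lambda(+\infty)=0$, not $1$; your positivity argument via Stein, which in fact gives the closed form $\Lambda(t)=\tfrac12\,\Expect[1/|t+v|]>0$, is unaffected.)

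The gap is Step 3. You have correctly isolated the only nontrivial content as $f(c)>1$ on $[0,C_0]$, equivalently $\Expect_r[r^2\Psi(\beta r)]>\beta/\sqrt{1+\beta^2}$, but you do not prove it: route (i) is not carried out, and for route (ii) you would need concavity of $f$ on $[0,1]$ (or an adequate substitute), which you do not establish and which is not obvious. The paper closes this gap the pedestrian way: Lipschitz continuity of $f$ plus finite-grid evaluation for $c$ bounded away from $0$, and Lipschitz continuity of $f_0'$ plus $f_0'(0)>1$ for $c$ near $0$. So as written your proposal is incomplete at exactly the point you flag; to finish, either supply the concavity/Bessel estimate for a fully analytic proof, or fall back on the paper's Lipschitz-plus-grid verification.
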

The following lemma is a result of  \cite[Theorem 2]{Waldspurger2016}:
\begin{lemma}\label{lemma:reduce}
There exists $0<C_0<1$, $C_1',C_2'>0$ such that if $|c^{(k_0)}_0|>C_0$ for some $k_0>0$, then the algorithm \eqref{eq:alternate_minimization2} converges to the solution with probability $1-\exp(-n/2)-C_1'\exp(-C_2'm)$, in the sense that
\[
\lim_{k\rightarrow\infty}\inf_{\psi\in\reals}\|e^{i\psi}\bz-\bx^{(k)}\|=0.
\]%$\lim_{k\rightarrow\infty}|c^{(k)}_0|=1$.
\end{lemma}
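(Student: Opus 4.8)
The plan is to turn the hypothesis $|c_0^{(k_0)}|>C_0$ into an $\ell_2$-closeness statement for $\bw^{(k_0)}$ and then invoke the local-convergence part of \cite[Theorem 2]{Waldspurger2016}. Since $\{\bu_i\}_{i=0}^{k_0}$ is orthonormal and $\bw^{(k_0)}=\sum_{i=0}^{k_0}c_i^{(k_0)}\bu_i$ is a unit vector, taking $\psi$ with $e^{i\psi}c_0^{(k_0)}=|c_0^{(k_0)}|$ gives
\[
\inf_{\psi\in\reals}\bigl\|e^{i\psi}\bw^{(k_0)}-\bu_0\bigr\|^2=\bigl(1-|c_0^{(k_0)}|\bigr)^2+\sum_{i=1}^{k_0}|c_i^{(k_0)}|^2=2\bigl(1-|c_0^{(k_0)}|\bigr)<2(1-C_0),
\]
using $\sum_{i=0}^{k_0}|c_i^{(k_0)}|^2=1$. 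Moreover, by the equivalences set up in Section~\ref{sec:equivalent}, $\bx^{(k)}$ converges to $\bz$ up to a global phase if and only if $\bw^{(k)}$ converges to $\bu_0$ up to a global phase, so it suffices to show $\inf_\psi\|e^{i\psi}\bw^{(k)}-\bu_0\|\to 0$.

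I would then recall \cite[Theorem 2]{Waldspurger2016} in the following local form: there is a universal $\delta_0>0$ and constants $C_1',C_2'>0$ such that, provided $m\geq Cn$ (which holds under $m/\log^3 m\geq Mn^{3/2}\log^{1/2}n$), with probability at least $1-C_1'\exp(-C_2'm)$ over the sensing vectors the iteration \eqref{eq:alternate_minimization2} contracts toward $\{e^{i\psi}\bu_0:\psi\in\reals\}$ on the neighborhood $\{\bw\in L:\inf_\psi\|\bw-e^{i\psi}\bu_0\|\leq\delta_0\}$, so that any trajectory started in that neighborhood converges to $e^{i\psi}\bu_0$ for some $\psi$. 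The two features that matter are that this good event is measurable with respect to the sensing vectors alone, and that the conclusion is uniform over the starting point in the neighborhood.

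To finish, fix $C_0\in(0,1)$ close enough to $1$ that $2(1-C_0)\leq\delta_0^2$. On the hypothesized event $|c_0^{(k_0)}|>C_0$, the displayed bound places $\bw^{(k_0)}$ inside the $\delta_0$-neighborhood of $\{e^{i\psi}\bu_0\}$, so, intersecting with the good event above, the tail $\{\bw^{(k)}\}_{k\geq k_0}$ converges to some $e^{i\psi}\bu_0$ and hence $\inf_\psi\|e^{i\psi}\bz-\bx^{(k)}\|\to 0$. The residual $\exp(-n/2)$ in the claimed probability collects the auxiliary events needed to justify the reductions of Section~\ref{sec:equivalent} (well-conditioning of $\bA^*\bA$ so that the substitution $\bA\mapsto\bA(\bA^*\bA)^{-1/2}$ and the normalization in \eqref{eq:alternate_minimization2} are legitimate) and to transcribe \cite[Theorem 2]{Waldspurger2016} into the present normalization.

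The one genuinely delicate point is the data-dependence of $\bw^{(k_0)}$: it is an intricate function of the sensing vectors, so a local-convergence statement for a \emph{fixed} initialization would not apply directly, and one must use the uniform-over-initializations form of Waldspurger's contraction estimate (which is conditioned only on regularity of the sensing matrix). Everything else — aligning the normalized, preconditioned iteration \eqref{eq:alternate_minimization2} with the exact hypotheses and conclusion of \cite[Theorem 2]{Waldspurger2016}, and bookkeeping the failure probabilities into $1-\exp(-n/2)-C_1'\exp(-C_2'm)$ — is routine and introduces no ideas beyond those in \cite{Waldspurger2016}.
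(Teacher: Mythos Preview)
Your proposal is correct and follows the same approach as the paper: convert $|c_0^{(k_0)}|>C_0$ into $\inf_\psi\|e^{i\psi}\bw^{(k_0)}-\bu_0\|^2=2(1-|c_0^{(k_0)}|)$, then invoke the uniform local contraction of \cite[Theorem 2]{Waldspurger2016}. One small clarification on bookkeeping: the $\exp(-n/2)$ is not needed to justify the reductions of Section~\ref{sec:equivalent} (those are exact algebraic equivalences), but rather comes from the Davidson--Szarek singular-value bound on $\bA$ (taking $t=\sqrt{n}$), which is what lets you pass between the metric $\|e^{i\psi}\bu_0-\bw^{(k)}\|$ and the metric $\|\bA^+(e^{i\psi}\bu_0-\bw^{(k)})\|=\|e^{i\psi}\bz-\bx^{(k)}\|$ in which Waldspurger's hypothesis and conclusion are stated.
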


\begin{lemma}\label{lemma:init}
With probability at least $1-1/\log n-\exp(-Cn)$, $|c^{(1)}_0|\leq \frac{1}{2\sqrt{n\log n}}$.
\end{lemma}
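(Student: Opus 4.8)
The plan is to reduce $|c^{(1)}_0|$ to a one-dimensional statistic whose law is known exactly, and then read off the tail estimate. By construction $\bw^{(1)}$ is a unit vector drawn uniformly from the sphere of the (random) $n$-dimensional subspace $L\subset\bbC^m$, while $\bu_0$ is a \emph{fixed} unit vector of $L$ (it is built from $L$ and the deterministic unit vector $\bz$, hence independent of $\bw^{(1)}$ conditionally on $L$); since $\{\bu_i\}$ is orthonormal and $\bw^{(1)}=c^{(1)}_0\bu_0+c^{(1)}_1\bu_1$, we have $c^{(1)}_0=\bu_0^{*}\bw^{(1)}$. By unitary invariance of the uniform measure on the sphere of $L$, conditionally on $L$ the quantity $|c^{(1)}_0|^2$ has the same law as the squared modulus of a single coordinate of a uniform random unit vector in $\bbC^n$ --- that is, the sum of squares of two among the $2n$ real coordinates of a uniform point on $S^{2n-1}$ --- which is $\mathrm{Beta}(1,n-1)$; this law is free of $L$, so $|c^{(1)}_0|^2\sim\mathrm{Beta}(1,n-1)$ unconditionally. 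The content of the lemma is a \emph{lower} bound on $|c^{(1)}_0|$ --- that a random initialization is, with overwhelming probability, not nearly orthogonal to $\bz$ --- which is exactly the foothold the amplification dynamics of the later sections needs; I therefore establish $|c^{(1)}_0|\ge\tfrac{1}{2\sqrt{n\log n}}$ with the stated probability.

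Given the distributional identity, the estimate is one line. For $X\sim\mathrm{Beta}(1,n-1)$ one has the exact tail $\Pr(X\ge t)=(1-t)^{n-1}$ for $t\in[0,1]$. Choosing $t=\tfrac{1}{4n\log n}$, so that the event $\{X\ge t\}$ is exactly $\{|c^{(1)}_0|\ge\tfrac{1}{2\sqrt{n\log n}}\}$, and using Bernoulli's inequality $(1-t)^{n-1}\ge 1-(n-1)t$,
\[
\Pr\!\left(|c^{(1)}_0|\ge\tfrac{1}{2\sqrt{n\log n}}\right)=\Big(1-\tfrac{1}{4n\log n}\Big)^{n-1}\ge 1-\tfrac{n-1}{4n\log n}\ge 1-\tfrac{1}{4\log n}\ge 1-\tfrac{1}{\log n},
\]
which is already stronger than claimed (by this route the $\exp(-Cn)$ slack is not even used). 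An equivalent route that does produce the $\exp(-Cn)$ term: realize the uniform unit vector as $\bg/\|\bg\|$ with $\bg\sim CN(0,\bI_n)$, so $|c^{(1)}_0|^2\overset{d}{=}|g_1|^2/\|\bg\|^2$; here $|g_1|^2$ is exponential with mean one (its real and imaginary parts are i.i.d.\ $N(0,1/2)$, whence $2|g_1|^2\sim\chi^2_2$), so $\Pr(|g_1|^2\ge\tfrac{1}{2\log n})=e^{-1/(2\log n)}\ge 1-\tfrac{1}{2\log n}$, while $\|\bg\|^2$ is a sum of $n$ i.i.d.\ mean-one exponentials and $\Pr(\|\bg\|^2>2n)\le\exp(-Cn)$ by a standard Chernoff bound; on the intersection $|c^{(1)}_0|^2\ge|g_1|^2/(2n)\ge\tfrac{1}{4n\log n}$, and a union bound gives probability at least $1-\tfrac{1}{2\log n}-\exp(-Cn)$.

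There is no real obstacle in this lemma; the only steps needing any care are confirming the distributional reduction --- that $\bu_0$ is genuinely a fixed unit vector of $L$ independent of $\bw^{(1)}$ given $L$, so that $|c^{(1)}_0|^2$ is \emph{exactly} $\mathrm{Beta}(1,n-1)$ --- and, in the alternative route, keeping the complex-Gaussian normalization straight so that the small-argument expansion costs only a $1/\log n$ factor and not a constant. I would present the Beta-distribution computation as the main proof and invoke the Gaussian picture only to explain the $\exp(-Cn)$ term and for intuition; the matching upper bound $|c^{(1)}_0|\le C\sqrt{(\log n)/n}$ follows from the same tail formula with probability $1-n^{-c}$ if it is ever wanted, but it plays no role here.
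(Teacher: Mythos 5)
Your proof is correct, and your reading of the statement is the right one: as the lemma is used in \eqref{eq:require_init} and \eqref{eq:cd}, it must supply a \emph{lower} bound $|c^{(1)}_0|\ge \tfrac{1}{2\sqrt{n\log n}}$, which is also what the paper's own proof establishes (the ``$\le$'' in the printed statement is a sign typo). Your secondary (Gaussian) route is essentially the paper's argument: the paper represents $\bw^{(1)}$ as $CN(0,P_L)$ so that $|c^{(1)}_0|=|w^{(1)}_1|/\|\bw^{(1)}\|$, bounds $\Pr\big(|w^{(1)}_1|\le 1/\sqrt{\log n}\big)<1/\log n$ via Lemma~\ref{lemma:complexgaussian} (the text there contains typos, writing $\|\bw^{(1)}\|$ where $|w^{(1)}_1|$ is meant and $\log n$ where $1/\log n$ is meant), and uses Lemma~\ref{lemma:norm_gaussian} to get $\Pr(\|\bw^{(1)}\|>2\sqrt{n})<2\exp(-Cn)$; your exponential-tail computation for $|g_1|^2$ is an equivalent substitute for that density bound. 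Your primary route is genuinely different and cleaner: after observing that, conditionally on $L$, $\bu_0$ is fixed and $\bw^{(1)}$ is uniform on the unit sphere of $L$, you identify the exact law $|c^{(1)}_0|^2\sim\mathrm{Beta}(1,n-1)$ and read off the tail $(1-t)^{n-1}$ at $t=\tfrac{1}{4n\log n}$, giving probability at least $1-\tfrac{1}{4\log n}$ with no concentration lemmas and no $\exp(-Cn)$ slack, which is slightly stronger than the stated bound. Both arguments are valid; the Beta computation buys exactness and a marginally better constant, while the Gaussian representation matches the paper's toolbox and is the version that extends when an exact law is not available.
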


For the rest of the proof, we first assume that for all $1\leq k\leq d$, $|c^{(k)}_0|<C_0$, since otherwise Lemma~\ref{lemma:reduce} already implies Theorem~\ref{thm:main}.  The goal is to show that under this assumption, we will have $|c^{(d+1)}_0|>C_0$, and then Lemma~\ref{lemma:reduce} implies Theorem~\ref{thm:main}.

Let $\bc=\{c_i\}_{i=0}^d\in\bbC^{d+1}$, we choose a set of covering balls of radius $n/m$ in the set $\mathcal{S}=\{\bc\in\bbC^{d+1}: \|\bc\|=1, |c_0|\leq C_0\}$. That is, we find a subset $\mathcal{S}_0\subset \mathcal{S}$ such that for any $\bc\in \mathcal{S}$, there exists an element $\bar{\bc}=\{\bar{c}_i\}_{i=0}^d\in\mathcal{S}_0$ such that $\|\bc-\bar{\bc}\|\leq n/m$. Following~\cite[Lemma 5.2]{vershynin2010introduction}, $\mathcal{S}_0$ can be chosen such that $|\mathcal{S}_0|\leq (1+\frac{2m}{n})^{2(d+1)}$.  We assume that for all $\bar{\bc}\in\mathcal{S}_0$, the property in Lemma~\ref{lemma:pertubation} holds for $\bx=\sum_{i=0}^d\bar{c}_i\bv_i$, and the property in Lemma~\ref{lemma:iteration_v} also holds.  Then for all $j=0,1,\cdots,d$, we have
\begin{align}\label{eq:pertubation}
&\left|\bu_j^*\left[\sum_{i=0}^{d}c_i\bu_i\odot \bu_0\right]-\bv_j^*\left[\sum_{i=0}^{d}\bar{c}_i\bv_i\odot \bv_0\right]\right|\\\nonumber
=&\left|\frac{1}{\|\bv_0\|}\bu_j^*\left[\sum_{i=0}^{d}c_i\bu_i\odot \bv_0\right]-\bv_j^*\left[\sum_{i=0}^{d}\bar{c}_i\bv_i\odot \bv_0\right]\right|\\\nonumber
=&\Bigg|\left(\frac{1}{\|\bv_0\|}\bu_j^*-\bv_j^*\right)\left[\sum_{i=0}^{d}c_i\bu_i\odot \bv_0\right]\\\nonumber&-\bv_j^*\left[\left[\sum_{i=0}^{d}c_i\bu_i\odot \bv_0\right]-\left[\sum_{i=0}^{d}\bar{c}_i\bv_i\odot \bv_0\right]\right]\Bigg|\\\nonumber
\leq &\left\|\frac{1}{\|\bv_0\|}\bu_j^*-\bv_j^*\right\|\|\bv_0\|\\\nonumber&+\|\bv_j\|_{\infty}\left\|\mathrm{sign}(\sum_{i=0}^d c_i\bu_i)-\mathrm{sign}(\sum_{i=0}^d \bar{c}_i\bv_i)\right\|_1\|\bv_0\|_\infty\\\nonumber
\leq& \|\bu_j-\bv_j\|+\left|\|\bv_0\|-1\right|\|\bv_j\|\\\nonumber&+\|\bv_j\|_{\infty}\|\bv_0\|_\infty m\max\left(\sum_{i=0}^d c_i\|\bu_i-\bv_i\|+|{c}_i-\bar{c}_i|\|\bv_i\|,\frac{n}{m}\right)\\\nonumber
\leq &2\frac{\log m}{\sqrt{m}} + \log^2m\left(2d\frac{n}{m}+d\max_{2\leq i\leq d}2|c_i|\sqrt{\frac{n}{m}}\right)+\|\bu_j-\bv_j\|.%\\
%&+\|\bu_j-\bv_j\|\\
%\leq &\begin{cases}2\sqrt{\frac{n}{m}}+2\sqrt{\frac{\log m}{m}}+\log^2m\left(2\sqrt{\frac{\log m}{m}}+(d-1)\max_{2\leq i\leq d}2|c_i|\sqrt{\frac{n}{m}}+\log m\frac{n}{m}\right)^{\frac{2}{3}},\,\,\text{if $j\geq 2$}\\
%3\sqrt{\frac{\log m}{m}}+\log^2m\left(2\sqrt{\frac{\log m}{m}}+(d-1)\max_{2\leq i\leq d}2|c_i|\sqrt{\frac{n}{m}}+\log m\frac{n}{m}\right),\,\,\text{if $j=0,1$}.
%\end{cases}
\end{align}
By the definition of $\mathcal{S}_0$, we have that for each $1\leq k\leq d$, there exists $\bar{\bc}^{(k)}=[\bar{c}_0^{(k)},\cdots, \bar{c}_d^{(k)}]\in\mathcal{S}_0$ such that
\[
\sum_{i=0}^k|\bar{c}_i^{(k)}-{c}_i^{(k)}|^2\leq \left(\frac{n}{m}\right)^2,\,\,\, |\bar{c}_0^{(k)}|<C_0.
\]
Combining the analysis in \eqref{eq:pertubation} (with $\bc, \bar{\bc}$ replaced by $\bc^{(k)}, \bar{\bc}^{(k)}$), and applying   \eqref{eq:alternate_minimization3} and Lemma~\ref{lemma:expectation}, we have that for $1\leq j\leq k$,
\begin{align}\label{eq:induction1}
&\left|\tilde{c}^{(k+1)}_j-g(|\bar{c}_0^{(k)}|)\bar{c}_j^{(k)}\right|\leq
4\sqrt{\frac{n}{m}}\\&+\log^2m\left(2d\max_{2\leq i\leq k}|c_i^{(k)}|\sqrt{\frac{n}{m}}+2d\frac{n}{m}\right)\nonumber
\end{align}
 and for $j=0$,
\begin{align}\label{eq:induction2}
&\left|\tilde{c}^{(k+1)}_0-f(|\bar{c}_0^{(k)}|)\bar{c}_0^{(k)}\right|\leq
3\frac{\log m}{\sqrt{m}}\\&+\log^2m\left(2d\max_{2\leq i\leq k}|c_i^{(k)}|\sqrt{\frac{n}{m}}+2d\frac{n}{m}\right).\nonumber
\end{align}
Combining \eqref{eq:induction1} and \eqref{eq:induction2} with \eqref{eq:alternate_minimization6},
\begin{align}\label{eq:induction3}
&\sqrt{\sum_{i=0}^{k+1}|\tilde{c}^{(k+1)}_i|^2}\geq \sqrt{\sum_{i=0}^{k}|\tilde{c}^{(k+1)}_i|^2}\\\nonumber\geq &\sqrt{f^2(|\bar{c}_0^{(k)}|)|\bar{c}_0^{(k)}|^2+g^2(|\bar{c}_0^{(k)}|)(1-|\bar{c}_0^{(k)}|^2)}-4(k+1)\sqrt{\frac{n}{m}}\\\nonumber&-2d(k+1)\log^2m\left(\max_{2\leq i\leq k}|c_i^{(k)}|\sqrt{\frac{n}{m}}+\frac{n}{m}\right)\\\nonumber
\geq & g(|\bar{c}_0^{(k)}|)-4(k+1)\sqrt{\frac{n}{m}}\\\nonumber&-2d(k+1)\log^2m\left(\max_{2\leq i\leq k}|c_i^{(k)}|\sqrt{\frac{n}{m}}+\frac{n}{m}\right).
\end{align}
Combining \eqref{eq:induction1} and \eqref{eq:induction3} with the update formula \eqref{eq:alternate_minimization5}, the estimation \eqref{eq::approximate_basis3}, and \eqref{eq:expectation2}, using induction  we can verify that for sufficiently large $n,m$, we have
\begin{equation}\label{eq:estc2}
\max_{2\leq j\leq k+1}|c^{(k+1)}_j|<\frac{4}{C_g}\sqrt{\frac{n}{m}}
\end{equation}
for all $0\leq k\leq d-1$. By the assumption ${m}/{\log^3m}\geq Mn^{3/2}\log^{1/2}n$ and Lemma~\ref{lemma:init}, we have
\begin{equation}|c^{(0)}_0|>\frac{Cn\log^3m}{m}.\label{eq:require_init}\end{equation}
Combining it  with \eqref{eq:induction2} and \eqref{eq:alternate_minimization6}, it can be verified by induction that when $M$ is sufficiently large,  
\begin{align*}&
|c^{(k+1)}_0|\geq |\tilde{c}^{(k+1)}_0|\geq \frac{C_f+1}{2} |\bar{c}^{(k)}_0|\geq \frac{C_f+1}{2} (|{c}^{(k)}_0|-\frac{n}{m})
\\\geq &\frac{C_f+3}{4} |{c}^{(k)}_0|.
\end{align*}
for all $1\leq 1\leq k$.

As a result, combining it with Lemma~\ref{lemma:init} we have \begin{equation}\label{eq:cd}
|c^{(d+1)}_0|\geq \left(\frac{C_f+3}{4}\right)^{C_d\log n}\frac{1}{2\sqrt{n\log n}}>C_0
\end{equation}
In fact, the second inequality requires
\[
C_d>\frac{\log C_0 + \frac{1}{2}\log n+\log\log n}{\log(\frac{C_f+3}{4})\log n},
\]
and for large $n$, our choice of $C_d=\frac{1}{2\log(\frac{C_f+3}{4})}+1$ would suffice. With \eqref{eq:cd},  Lemma~\ref{lemma:reduce} proves Theorem~\ref{thm:main}.

In the end, we summarize the probability that the above analysis holds: the proof requires the events in all lemmas, and in addition, the events in Lemma~\ref{lemma:pertubation} and Lemma~\ref{lemma:iteration_v} should hold for all $\bx=\sum_{i=0}^d\bar{c}_i\bv_i$, where $\bar{\bc}=[\bar{c}_0,\bar{c}_1,\cdots,\bar{c}_d]$ is an element in $\mathcal{S}_0$. As a result, the probability is at least
\begin{align*}
&1-2Cmd\exp(-\log^2m)-\exp(-n/2)-C_1'\exp(-C_2'm)\\&-\log n-\exp(Cn)-\Big(1+\frac{2m}{n}\Big)^{2d+2}\Big(m\exp(-n/6)\\&-\exp(-C\log^4m)-d\exp(-Cn)\Big),
\end{align*}
which can be verified to converge to $1$ as $n,m\rightarrow\infty$.
%\end{proof}
\subsection{Proof of the Main Lemmas}\label{sec:lemma}
\begin{proof}[Proof of Lemma~\ref{lemma:approximate_basis}]
Since $L$ is a random $n$-dimensional subspace in $\bbC^m$, and $\Proj_L$ is a random projection matrix to $L$, $\bu_0$ is random unit vector in $\bbC^m$ that is uniformly sampled from the sphere in $\bbC^m$. Therefore, it can be obtained through $\bv_0\sim CN(0,\bI/m)$ by
\[
\bu_0=\frac{\bv_0}{\|\bv_0\|}.
\]
Applying Lemma~\ref{lemma:norm_gaussian} (with a scaling of $\sqrt{m}$) and $\|\bu_0-\bv_0\|=|\|\bv_0\|-1|$, we proved \eqref{eq::approximate_basis1} for $k=0$.

 Under the $\sigma$-algebra generated by $\bu_0$, the conditional distribution of $L$ is a random subspace generated by
 \[
 \Sp(\bu_0)\oplus L_0,
 \]
where $L_0$ is a random $n-1$-dimensional subspace in the $m-1$-dimensional hyperplane $\Sp(\bu_0)^\perp$ (here $\oplus$ represents the direct sum of two subspaces). Since $\bw^{(1)}$ is a random initialization on $L$ and $\bu^{(1)}$ is the projection of $\bw^{(1)}$ onto $\Sp(\bu_0)^\perp$, $\bu_1$ is a random unit vector on $L_0$. Combining it with the conditional distribution of $L_0$, $\bu_1$ is a random unit vector that is orthogonal to $\bu_0$. As a result, it can be generated from $\bv_1\sim CN(0,\bI/m)$ as follows:
\[
\bu_1=\frac{\bv_1-\bu_0\bu_0^*\bv_1}{\|\bv_1-\bu_0\bu_0^*\bv_1\|}.
\]
Since
\begin{equation}\label{eq:tt1}
Pr(|1-\|\bv_1\||>\frac{\log m}{3\sqrt{m}})<\exp(-C\log^2 m)
\end{equation}
and $\bu_0^*\bv_1\sim CN(0,1/m)$ and as a result, Lemma~\ref{lemma:tail} with $m=1$ implies that \begin{equation}\label{eq:tt2} \Pr\left(|\bu_0^*\bv_1|>\frac{\log m}{3\sqrt{m}}\right)<\exp(-C\log^2m).\end{equation}
%Considering that $\|\bu_0\bu_0^*\bv_1\|=|\bu_0^*\bv_1|$, \eqref{eq::approximate_basis1} with $k=1$ is verified as follows: u
Applying Lemma~\ref{lemma:pertubation_uni}, under the event of \eqref{eq:tt2},
\[
\left\|\bu_1-\frac{\bv_1}{\|\bv_1\|}\right\|\leq 2\|\bu_0\bu_0^*\bv_1\|\leq \frac{2\log m}{3\sqrt{m}}
\]
combining it with \eqref{eq:tt1}, \eqref{eq::approximate_basis1} with $k=1$ holds.

To prove \eqref{eq::approximate_basis2}, we first investigate the  conditional distribution of $L$ under the $\sigma$-algebra generated by the algorithm so far, that is, generated by $\{\bu_i\}_{i=0}^{k-1}$ and $\{\bw_i\}_{i=0}^{k-1}$. That is, what is the conditional distribution of $L$ when $\{\bu_i\}_{i=0}^{k-1}$ and $\{\bw_i\}_{i=0}^{k-1}$ are fixed?  Under this $\sigma$-algebra, $L$ satisfies the following properties:
\begin{align*}
\bu_i \in L, \,\,\,\,\,\,&\text{ $0\leq i\leq k-1$} \\ [\bw^{(i)}\odot\by] - \bw^{(i+1)}\bw^{(i+1)*}[\bw^{(i)}\odot\by] \perp L, &\text{ $1\leq i\leq k-2$}.
\end{align*}
The second property above holds since $\bw^{(i+1)}$ is the normalization projection of $\bw^{(i)}\odot\by$ onto $L$, and as a result, $\bw^{(i+1)}\bw^{(i+1)*}[\bw^{(i)}\odot\by]=P_L[\bw^{(i)}\odot\by]$.
Recall that $L$ is a random $n$-dimensional subspace in $\bbC^m$, with this $\sigma$-algebra, its conditional distribution then can be written as
\[
L= \Sp\{\bu_i\}_{i=0}^{k-1}\oplus L_k,
\]
where $L_k$ is a random $n-k$-dimensional subspace in the $m-2k+2$-space $R_k$ that is orthogonal to $\bu_i$, $0\leq i\leq k-1$ and $[\bw^{(i)}\odot\by] - \bw^{(i+1)}\bw^{(i+1)*}[\bw^{(i)}\odot\by]$, $1\leq i\leq k-2$.

 Since $\bw^{(k)}$ is the projection of $\bw^{(k-1)}\odot \by$ onto the subspace $L$ and $\bu_k$ is the unit vector of the projection of $\bw^{(k)}$ to the subspace orthogonal to $\Sp\{\bu_i\}_{i=0}^{k-1}$, in conclusion,  $\bu^{(k)}$ is the unit vector that corresponds to the projection of  $P_{R_k}[\bw^{(k-1)}\odot \by]$ onto $L_k$, a random $n-k$-dimensional subspace in $R_k$. Applying Lemma~\ref{lemma:projection0} (with $m, n, \bbC^m$ replaced by $m-2k+2$, $n-k$, $R_k$), $\bu_k$ can be written as
\begin{equation}\label{eq:uvk}
\bu_k=\sqrt{1-a^2}\bv_k'+a\frac{P_{R_k}[\bw^{(k-1)}\odot \by]}{\|P_{R_k}[\bw^{(k-1)}\odot \by]\|},
\end{equation}
where $\bv_k'$ is a unit vector on $R_k'$, the $m-2k+1$-dimensional subspace inside $R_k$ and orthogonal to $P_{R_k}[\bw^{(k-1)}\odot \by]$, and $a$ is the length of the projection of $P_{R_k}[\bw^{(k-1)}\odot \by]/\|P_{R_k}[\bw^{(k-1)}\odot \by]\|$ onto $L_k$.

Since $L_k$ is a random subspace in $R_k$, $\bv_k'$ is a random unit vector on $R_k'$ and can be derived through $\bv_k\sim CN(0,\bI/m)$ by
\[
\bv_k'=\frac{P_{R_k'}\bv_k}{\|P_{R_k'}\bv_k\|}.
\]
Again use the fact that \[
 \Pr(|1-\|\bv_k\||>\frac{1}{6}\sqrt{\frac{n}{m}})<\exp(-Cn)\]
and Lemma~\ref{lemma:tail} implies
 \[
\Pr({\|\bv_k-P_{R_k'}\bv_k\|}>\frac{1}{6}\sqrt{\frac{n}{m}})<2(2k-1)\exp(-Cn/4(2k-1)),
\]
and Lemma~\ref{lemma:projection} implies
\begin{equation}\label{eq:length_ck}
\Pr(a^2>2\cdot\frac{n-k}{m-2k+2})<4\exp\left(-C(n-k)\right).
\end{equation}
Combining all estimations above  with \eqref{eq:uvk} and Lemma~\ref{lemma:pertubation_uni},  \eqref{eq::approximate_basis2} is proved as follows:
\begin{align*}
&\|\bu_k-\bv_k\|\leq a+\sqrt{1-a^2}\|\bv_k'-\bv_k\|\\
\leq &  a+\sqrt{1-a^2}(|\|\bv_k\|-1|+\|\bv_k'-\frac{\bv_k}{\|\bv_k\|}\|)\\
\leq &  a+(|\|\bv_k\|-1|+2\|P_{R_k'}{\bv_k}-{\bv_k}\|/\|\bv_k\|)\\
\leq & \sqrt{2\cdot\frac{n-k}{m-2k+2}}+\left(\frac{1}{6}\sqrt{\frac{n}{m}}+2\frac{\frac{1}{6}\sqrt{\frac{n}{m}}}{1-\frac{1}{6}\sqrt{\frac{n}{m}}}\right)\leq 2\sqrt{\frac{n}{m}}.
\end{align*}

Note $\tilde{c}_{k}^{(k)}$ is the length of projection of $P_{R_k}[\bw^{(k-1)}\odot \by]$ onto $L_k$, and $\|P_{R_k}[\bw^{(k-1)}\odot \by]\|\leq \|[\bw^{(k-1)}\odot \by]\|=\|\by\|=1$, by the definition of $a$ we have $\tilde{c}_{k}^{(k)}\leq a$. Then \eqref{eq:length_ck} implies \eqref{eq::approximate_basis3}.

At last, \eqref{eq:additional_assumptions}  is obtained by applying Lemma~\ref{lemma:tail} (with union bound and $m=1$ for the $\|\cdot\|_\infty$ norm).
\end{proof}

\begin{proof}[Proof of Lemma~\ref{lemma:pertubation}]
It is based on a combination of Lemma~\ref{lemma:pertubation_sum}, \ref{lemma:order}, and \ref{lemma:pertubation_uni}. In particular, $t=\max(\|\by\|,n/m)$ in Lemma~\ref{lemma:pertubation_sum} (remark: there is a scaling factor of $\sqrt{m}$ between $\bx$ in Lemma~\ref{lemma:pertubation} and Lemma~\ref{lemma:pertubation_sum}).
\end{proof}
\begin{proof}[Proof of Lemma~\ref{lemma:iteration_v}]
The proof is based on two components: first, we have \begin{align}\label{eq:expectation21}\Expect \bv_0^*[\bx\odot \bv_0]=f(|c_0|)c_0, \end{align}
  and for any $1\leq j\leq d$,
  \begin{align}\label{eq:expectation22}
\Expect \bv_j^*[\bx\odot \bv_0] = g(|c_0|){c_j}.
\end{align}
In fact, \eqref{eq:expectation21} follows directly from the definition of $f$, and \eqref{eq:expectation22} follows from the definition of $g$ as follows: that if we write $\bx=c_0\bv_0+\sqrt{1-|c_0|^2}\bv'$ with $\bv'=\frac{1}{\sqrt{\sum_{i=1}^d|c_i|^2}}\sum_{i=1}^dc_i\bv_i$, then $\bv'\sim CN(0,\bI/m)$ and the definition of $g$ implies \begin{align*}\Expect \bv'^*[\bx\odot \bv_0]=\sqrt{1-|c_0|^2}g(|c_0|)\end{align*}
Noting that the correlation between $\bv_j$ and $\bv'$ is  $\frac{c_j}{\sqrt{1-|c_0|^2}}$, \eqref{eq:expectation22} is proved.

Since each element of $\bv_0$ is sampled from $CN(0,1/m)$, it can be verified that the real component and the imaginary component of each entry of $\bx\odot \bv_0$ is sub-gaussian, with sub-gaussian parameter bounded above by $C/\sqrt{m}$. Applying Lemma~\ref{lemma:subgaussian}, this Lemma is proved.

\end{proof}
\begin{proof}[Proof of Lemma~\ref{lemma:expectation}]
We first let $f_0(c)=cf(c)$ and $g_0(c)=\sqrt{1-c^2}g(c)$, and show the following arguments:
\begin{align}\label{argument1}
&\text{$f_0$ and $g_0$ are bounded for $0\leq c\leq C_0$}.\\\label{argument2}
&\text{$f_0$, $f_0'$, and $g_0$ are Lipschitz continuous for $0\leq c\leq C_0$}.\\
&\text{$f_0'(0)>1$}\label{argument3}.
\end{align}

Now we verify \eqref{argument1}-\eqref{argument3}. First, by the rotational invariance of the complex normal distribution, $f_0$ can be equivalently defined by
\[
f_0(c)=\Expect_{x_0,x_1\sim CN(0,1)}\frac{x_0}{|x_0|}|cx_0+\sqrt{1-c^2}x_1|(cx_0+\sqrt{1-c^2}x_1)^*.
\]
It is clear that $f_0(c)=\Expect_{x_0,x_1\sim CN(0,1)}<\Expect (|x_0|+|x_1|)^2$ and is therefore bounded.

For all $0\leq c, c'\leq 1$,
\[
|cx_0+\sqrt{1-c^2}x_1-(c'x_0+\sqrt{1-c'^2}x_1)|\leq |c-c'||x_0|+\frac{|c-c'|}{1-C_0^2}|x_1|,
\]
so we have
\begin{align*}
|f(c)-f(c')|\leq |c-c'| \Expect 2\left(|x_0|+\frac{1}{1-C_0^2}|x_1|\right)(|x_0|+|x_1|)
\end{align*}
and as a result,  $f$ is Lipschitz continuous for $0\leq c\leq C_0$ %Similarly, $g$ is also Lipschitz continuous.

$f'$ can be written as
\begin{align*}
&f'(c)=\Expect \frac{x_0}{|x_0|}\Bigg(\left({(\sqrt{1-c^2}-\frac{c^2}{\sqrt{1-c^2}})x_0^*x_1+c|x_0|^2-c|x_1|^2}\right)\times\\&\frac{(cx_0+\sqrt{1-c^2}x_1)^*}{{|cx_0+\sqrt{1-c^2}x_1|}}+|cx_0+\sqrt{1-c^2}x_1|(x_0-\frac{c}{\sqrt{1-c^2}}x_1)^*\Bigg)
\end{align*}
By Lemma~\ref{lemma:pertubation_uni},
\begin{align*}
&\left|\frac{cx_0+\sqrt{1-c^2}x_1}{|cx_0+\sqrt{1-c^2}x_1|}-\frac{c'x_0+\sqrt{1-c'^2}x_1}{|c'x_0+\sqrt{1-c'^2}x_1|}\right|\\\leq &2\frac{|(c-c')x_0+(\sqrt{1-c^2}-\sqrt{1-c'^2})x_1|}{|cx_0+\sqrt{1-c^2}x_1|}\\\leq &2|c-c'|\frac{|x_0|+\frac{1}{\sqrt{1-C_0^2}}|x_1|}{|cx_0+\sqrt{1-c^2}x_1|}.
\end{align*}
Considering that $\Expect |x_0|^{k_1}|x_1|^{k_2}/|cx_0+\sqrt{1-c^2}x_1|$ exists for all $k_1,k_2\geq 0$ (note that $\Pr(|cx_0+\sqrt{1-c^2}x_1|<r)<r^2$ as shown in Lemma~\ref{lemma:complexgaussian}), one can show that $f'$ is Lipschitz continuous for $0\leq c\leq C_0$.

Similarly, one can prove that $g$ is bounded for $0\leq c\leq 1$and Lipschitz continuous for $0\leq c\leq C_0$.
Third,
\[
f_0'(0)=\Expect \frac{x_0}{|x_0|}\left(x_0^*|x_1|+|x_1|x_0^*\right)=2\Expect |x_0||x_1|\approx 1.56>1.
\]

Next, we will prove Lemma~\ref{lemma:expectation} based on \eqref{argument1}-\eqref{argument3}.
 To prove that $\min_{0<c<C_0}g(c)>C_g$, we first note that $g(c)$ is Lipschitz continuous since $g_0(c)$ and $\frac{1}{\sqrt{1-c^2}}$ are Lipschitz continuous. Then this can be verified numerically by calculating $g(c)$ for a sufficiently dense sampling of points in $0<c<C_0$. For example, if  the Lipschitz constant of $g$ is $L_{g}$, and for $c=\{kC_g/L_{g}\}_{k=0}^{\lfloor\frac{C_0L_{g}}{C_g}\rfloor}$, $g(c)>2C_g$, then we have $\min_{0<c<C_0}g(c)>C_g$. As shown in Figure~\ref{fig:expectation}, such numerical verification is doable.

To prove  $\min_{0<c<C_0}f(c)>C_f$ for some $C_f>1$, one first note that since $f(c)=\frac{f_0(c)-f_0(0)}{c-0}=\int_{x=0}^cf_0'(x)$, $f_0'(0)>1$ and the Lipschitz continuity of $f_0'$ (assuming that the parameter is $L_{f_0'}$), it shows that
\[
\min_{0<c<\frac{f_0'(0)-1}{2L_{f_0'}}}f(c)>\frac{f_0'(0)+1}{2}.
\]
It remains the show that $\min_{\frac{f_0'(0)-1}{2L_{f_0'}}<c<C_0}f(c)>C_f$ for some $C_f>1$. The proof is then similar to the proof of $g$.

To prove \eqref{eq:expectation2}, one may first use the same technique to verify that $\max_{0<c<C_0}g(c)<1$. Combining it with \eqref{eq:expectation1}, \eqref{eq:expectation2} is proved.

We included the numerical values of $f(c)$, $g(c)$, and $f(c)/g(c)$ in Figure~\ref{fig:expectation}, to show that the inequalities \eqref{eq:expectation1} and \eqref{eq:expectation2} hold empirically, and as a result, the numerical verification method above would work.
\begin{figure}\label{fig:expectation}
  \centering
    \includegraphics[width=0.5\textwidth]{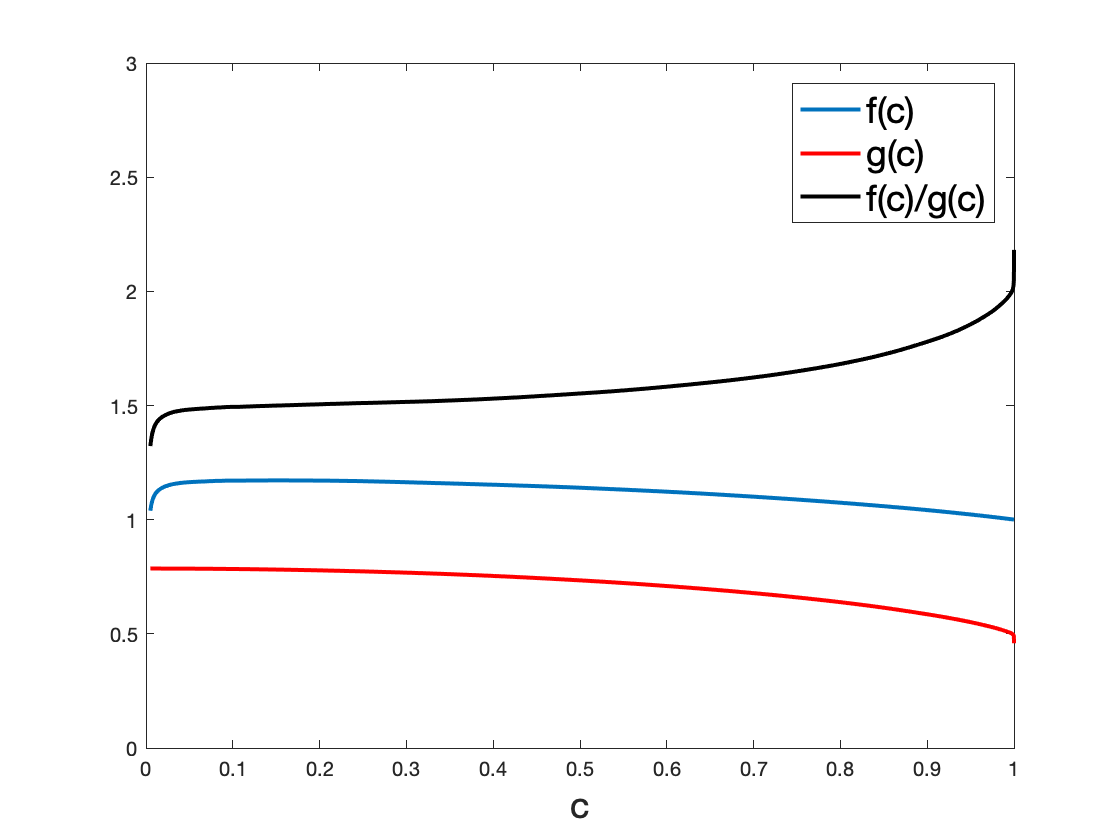}
      \caption{The empirical values of $f$, $g$, and $f/g$.}
\end{figure}
\end{proof}

\begin{proof}[Proof of Lemma~\ref{lemma:reduce}]
For convenience, we first write down \cite[Theorem 2]{Waldspurger2016} explicitly:
\begin{thm}[\cite{Waldspurger2016}, Theorem 2]
There exists $C_0', C_1', C_2', M>0$ such that when $m>Mn$, then with probability at least $1-C_1'\exp(-C_2'm)$, for any $\bx\in\bbC^n$ such that
\[
\inf_{\psi\in\reals}\|e^{i\psi}\bz-\bx\|\leq C_0'\|\bz\|,
\]
then
\[
\inf_{\psi\in\reals}\|e^{i\psi}\bz-\bx^+\|\leq \delta\inf_{\psi\in\reals}\|e^{i\psi}\bz-\bx\|,
\]
where $\bx^+$ is the vector obtained by applying one iteration of the standard alternating projection algorithm (without normalization) \eqref{eq:alternating_algorithm} to $\bx$, and with $\{\ba_i\}_{i=1}^m$ i.i.d. sampled from $CN(0,\bI)$.
\end{thm}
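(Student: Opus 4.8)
The plan is to prove local linear convergence by passing to the measurement domain $\bbC^m$, linearising both projections at the true signal, and extracting a spectral gap from the randomness of $\bA$. Normalise $\|\bz\|=1$, set $\bw_z=\bA\bz$, and for the given $\bx$ set $\bw=\bA\bx\in L$ and $\bw^+=P_LP_{\mathcal A}\bw$. Since $\bA^*P_L=\bA^*$ we have $\bA\bx^+=\bw^+$ and $e^{\iu\psi}\bz-\bx^+=(\bA^*\bA)^{-1}\bA^*(e^{\iu\psi}\bw_z-\bw^+)$, so
\[
\inf_\psi\|e^{\iu\psi}\bz-\bx^+\|\le\smin(\bA)^{-1}\inf_\psi\|e^{\iu\psi}\bw_z-\bw^+\|,\qquad \inf_\psi\|e^{\iu\psi}\bw_z-\bw\|\le\smax(\bA)\inf_\psi\|e^{\iu\psi}\bz-\bx\|.
\]
On the event $\|\bA^*\bA/m-\bI\|\le\eta$, which fails with probability $\le C\exp(-cm)$ once $m\ge M(\eta)n$, the ratio $\smax(\bA)/\smin(\bA)$ is at most $(1+\sqrt\eta)/(1-\sqrt\eta)$, arbitrarily close to $1$. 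Hence it suffices to establish a single-step contraction with factor $\delta_0<1$ for $\bw\mapsto\bw^+$ on $L$, in the metric $\inf_\psi\|e^{\iu\psi}\bw_z-\cdot\|$ and valid for all $\bw\in L$ with $\|\bw-\bw_z\|\le c_0\|\bw_z\|$; then $\delta=\delta_0(1+o(1))<1$.

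Next I would linearise. Since $|[\bw_z]_i|=|\ba_i^*\bz|=y_i$ and $\bw_z\in L$, $\bw_z$ is an exact fixed point of $P_LP_{\mathcal A}$. The expansion $y_i\,\phase([\bw_z]_i+\delta_i)=[\bw_z]_i+\iu\,\imag\!\bigl(\overline{\phase([\bw_z]_i)}\,\delta_i\bigr)\phase([\bw_z]_i)+O(|\delta_i|^2/|[\bw_z]_i|)$ identifies the real-linear derivative of $P_{\mathcal A}$ at $\bw_z$ as the coordinatewise orthogonal projection $\Pi_{\mathcal A}$ onto the real lines $\iu\,\phase([\bw_z]_i)\reals$; its range $\mathcal R:=\{(\iu\,\phase([\bw_z]_i)\,t_i)_i:t_i\in\reals\}$ has real dimension $m$, and one checks $\Pi_{\mathcal A}\bw_z=0$ and $\Pi_{\mathcal A}(\iu\bw_z)=\iu\bw_z$. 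Thus the linearised iteration on $L$ is $\bdelta\mapsto Q\bdelta:=P_L\Pi_{\mathcal A}\bdelta$; since also $P_L(\iu\bw_z)=\iu\bw_z$, the direction $\iu\bw_z$ is an exact fixed vector of $Q$ — the tangent of the global phase symmetry, which is exactly why the statement must quotient by $\psi$ — and one checks $Q$ preserves the real-orthogonal complement of $\iu\bw_z$ inside $L$.

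The crux, and the step I expect to be hardest, is a uniform spectral gap: with probability $\ge1-C_1\exp(-C_2m)$, provided $m\ge Mn$ for a large enough $M$, $\|Q\bdelta\|\le\rho\|\bdelta\|$ for every $\bdelta\in L$ with $\bdelta\perp_{\reals}\iu\bw_z$, where $\rho<1$ is an absolute constant. As $P_L$ is a contraction and $\Pi_{\mathcal A}\bw_z=0$, we have $\|Q\bdelta\|\le\|\Pi_{\mathcal A}\bdelta\|$, so this reduces to the claim that, apart from the forced common line $\iu\bw_z$, the $2n$-real-dimensional subspace $L$ and the $m$-real-dimensional subspace $\mathcal R$ meet only at principal angles bounded away from $0$. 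I would prove this by conditioning on the vector $\bw_z$ itself: then $\mathcal R$ is frozen, while the conditional law of $\bA$ gives $L=\mathrm{span}_{\bbC}(\bw_z)\oplus L_1$ with $L_1$ a Haar-uniform $(n-1)$-complex-dimensional subspace independent of $\bw_z$, hence independent of $\mathcal R$; moreover $\bw_z\perp_{\reals}\mathcal R$, so only $L_1$ and $\iu\bw_z$ can contribute small angles. The required lower bound on the remaining angles then becomes a lower bound on the smallest singular value of a suitable Gaussian matrix (the standard Gaussian model for a Haar subspace): the dimension budget $m+2n\ll 2m$ leaves room, a Davidson--Szarek type bound supplies the $\exp(-cm)$ tail, and enlarging $M$ pushes $\rho$ down toward $1/\sqrt2$. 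Decoupling $L_1$ from the data-dependent phases that define $\mathcal R$ is precisely what conditioning on the whole vector $\bw_z$, not merely on its span, achieves, and this is the delicate point.

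Finally I would control the nonlinear remainder and assemble the estimates. For $\bw=\bw_z+\bdelta$ with $\bdelta\in L$ and $\|\bdelta\|\le c_0\|\bw_z\|$, bound $\|P_{\mathcal A}\bw-\bw_z-\Pi_{\mathcal A}\bdelta\|$ coordinatewise by $C|\delta_i|^2/|[\bw_z]_i|$ on indices with $|\delta_i|\le\tfrac12|[\bw_z]_i|$ and crudely by $3|\delta_i|$ elsewhere. On a regularity event for $\bA$ — spectral concentration, incoherence of $L$ with the standard basis, and uniform-in-$\bu$ control of $\sum_i|\ba_i^*\bu|^4$ and of $\#\{i:|\ba_i^*\bz|\le t|\ba_i^*\bu|\}$, each failing with probability $\le\exp(-cm)$ once $m\gtrsim n\log(\cdot)$ — the first group contributes at most $Cc_0\|\bdelta\|$, because $|\delta_i|/|[\bw_z]_i|$ has order $c_0$ for the bulk of indices while the few indices with $|[\bw_z]_i|$ small carry negligible $\bdelta$-mass; the second group is bounded the same way. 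Writing $\bdelta=t\,\iu\bw_z+\bdelta^{\perp}$ and re-optimising $\psi$ to absorb the $\iu\bw_z$-component then gives $\inf_\psi\|e^{\iu\psi}\bw_z-\bw^+\|\le(\rho+Cc_0)\inf_\psi\|e^{\iu\psi}\bw_z-\bw\|+O(\|\bdelta\|^2/\|\bw_z\|)$. Taking $c_0$ small makes $\delta_0:=\rho+Cc_0<1$, and transporting back to the $\bx$-domain with the $1+o(1)$ factor from $\smax(\bA)/\smin(\bA)$ yields the theorem, with contraction factor $\delta<1$ and failure probability $C_1'\exp(-C_2'm)$ from a union bound over all the above events.
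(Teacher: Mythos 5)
You should first note what the paper itself does with this statement: it is not proved here at all --- it is quoted verbatim as Theorem~2 of \cite{Waldspurger2016} inside the proof of Lemma~\ref{lemma:reduce} and used as a black box, so the only meaningful comparison is with Waldspurger's own (long) proof, which your sketch is implicitly trying to reproduce. Your architecture is reasonable and partly sound: passing to $\bbC^m$ via the condition number of $\bA$, observing that $\bw_z=\bA\bz$ is a fixed point of $P_LP_{\mathcal A}$, identifying the real-linear derivative of $P_{\mathcal A}$ at $\bw_z$ as the coordinatewise projection onto the lines $\iu\,\phase([\bw_z]_i)\reals$, and extracting a uniform bound $\rho\approx 1/\sqrt2$ for $P_L$ composed with that projection on directions real-orthogonal to $\iu\bw_z$ --- the last step does go through by conditioning on the whole vector $\bw_z$ (so that the Haar part of $L$ is independent of the phases) plus a net over the $2(n-1)$-real-dimensional sphere, with $\exp(-cm)$ tails at $m\geq Mn$.

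The genuine gap is the uniform control of the nonlinear remainder, which is exactly the technically heavy core of Waldspurger's proof and is not delivered by the ingredients you list. What you need is, uniformly over all $\bh$ with $\|\bh\|\leq c_0$, an estimate of the form $\sum_i |\ba_i^*\bh|^2\min\bigl(|\ba_i^*\bh|/|\ba_i^*\bz|,1\bigr)\leq \epsilon\, m\|\bh\|^2$, and the events you invoke cannot produce it in the regime $m\asymp n$: the uniform fourth-moment event $\sup_{\|\bu\|=1}\sum_i|\ba_i^*\bu|^4\leq Cm$ is simply false unless $m\gtrsim n^2$ (take $\bu=\ba_1/\|\ba_1\|$, which gives a contribution $\approx n^2$ from a single term); Cauchy--Schwarz against inverse moments of $|\ba_i^*\bz|$ is also blocked because $\Expect|\ba_i^*\bz|^{-2}=\infty$ for complex Gaussians; and your counting event is stated with an $m\gtrsim n\log(\cdot)$ requirement, which already falls short of the claimed $m>Mn$. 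Your qualitative claim that ``the few indices with $|[\bw_z]_i|$ small carry negligible $\bdelta$-mass'' is precisely what must be proved uniformly over the random subspace $L$, since the dangerous $\bh$ are those aligned with a single $\ba_i$ whose coordinate spikes can coincide with small $|\ba_i^*\bz|$; handling this interaction requires the threshold-decomposition and restricted-sum estimates that occupy most of Waldspurger's argument (or an equivalent truncation scheme), not a generic concentration-plus-net bound. Until that step is supplied with tools valid at $m\asymp n$ and failure probability $\exp(-cm)$, the proposal does not establish the theorem as stated.
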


By the analysis in Section~\ref{sec:equivalent}, \eqref{eq:alternating_algorithm} is equivalent to the algorithm that we are analyzing in \eqref{eq:alternate_minimization2} in terms of $\bw^{(k)}$ as in $\bbC^m$. Therefore, \cite[Theorem 2]{Waldspurger2016} implies that for $\bA^+=\bA^*(\bA^*\bA)^{-1}$, if
\[
\inf_{\psi\in\reals}\|\bA^+(e^{i\psi}\bu_0-\bw^{(k_0)})\|\leq C_0'\|\bA^+\bu_0\|,
\]
then for any $k\geq k_0$,
\[
\inf_{\psi\in\reals}\|\bA^+(e^{i\psi}\bu_0-\bw^{(k+1)})\|\leq \delta \inf_{\psi\in\reals}\|\bA^+(e^{i\psi}\bu_0-\bw^{(k)})\|.
\]
Since $\bA$ is a complex Gaussian, \cite[Theorem 2.13]{Davidson2001} implies that the condition number of  $\bA$ is bounded with high probability:
\[
\Pr\Big(
\frac{\sigma_{\max}(\bA)}{\sigma_{\min}(\bA)}\leq \frac{\sqrt{m}+\sqrt{n}+t}{\sqrt{m}-\sqrt{n}-t}
\Big)\leq 1-2\exp(-t^2/2).
\]
Combining it (use $t=\sqrt{n}$) with $\inf_{\psi\in\reals}\|e^{i\psi}\bu_0-\bw^{(k)}\|=\sqrt{(1-|c_0^{(k)}|)^2+\sum_{i=1}^k|c_i^{(k)}|^2}=\sqrt{(1-|c_0^{(k)}|)^2+1-|c_0^{(k)}|^2}$, there exists $0<C_0<1$ such that when $|c_0^{(k)}|>C_0$,  $\inf_{\psi\in\reals}\|\bA^+(e^{i\psi}\bu_0-\bw^{(k)})\|<C_0'\|\bA^+\bu_0\|$ (note that $\bA$ and $\bA^+$ have the same condition numbers), and then \cite[Theorem 2]{Waldspurger2016} implies $\lim_{k\rightarrow\infty}\inf_{\psi\in\reals}\|\bA^+(e^{i\psi}\bu_0-\bw^{(k)})\|= 0$. Applying the fact that the condition number of  $\bA^+$ is bounded again, we have $\lim_{k\rightarrow\infty}\inf_{\psi\in\reals}\|e^{i\psi}\bu_0-\bw^{(k)}\|= 0$.
\end{proof}
\begin{proof}[Proof of Lemma~\ref{lemma:init}]
WLOG we may assume that $L=\Sp(\be_1,\cdots,\be_n)$ and $\bu_0=\be_1$, and $\bw^{(1)}\sim CN(0,P_L)$ (since $\bw^{(1)}$ is a random vector on $L$). Then
\[
|c^{(1)}_0|=\frac{|w^{(1)}_1|}{\|\bw^{(1)}\|}.
\]
Applying Lemma~\ref{lemma:norm_gaussian} and note that $\|\bw^{(1)}\|^2$ is the sum of $n$ unit complex gaussian squared, $\Pr(\|\bw^{(1)}\|>2\sqrt{n})<2\exp(-Cn)$; and Lemma~\ref{lemma:complexgaussian} implies that  $\Pr(\|\bw^{(1)}\|<1/\sqrt{\log n})<\log n$.

\end{proof}
\subsection{Axillary Lemmas}\label{sec:auxillary}
\begin{lemma}\label{lemma:projection0}
Assuming that projection of a unit vector $\bx\in\bbC^m$ to a random $n$-dimensional subspace $L$ has length $a$, i.e., $\|P_L\bx\|=a$, then
\[
\frac{P_L\bx}{\|P_L\bx\|}= a\bx+\sqrt{1-a^2}\bv,
\]
where $\bv$ is a unit vector perpendicular to $\bx$, that is, $\bv^*\bx=1$.
\end{lemma}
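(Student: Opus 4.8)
The plan is to prove the identity by elementary linear algebra, exploiting only the defining property of the orthogonal projector $P_L$ (Hermitian and idempotent). Throughout one may assume $0<a<1$: the case $a=0$ means $\bx\perp L$, so $P_L\bx=\zerovct$ and the left-hand side is undefined, while $a=1$ means $\bx\in L$, so $P_L\bx=\bx$ and any unit $\bv\perp\bx$ works because $\sqrt{1-a^2}=0$; both are exceptional, and occur with probability zero when $L$ is a random $n$-dimensional subspace with $0<n<m$.

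The first step is to identify the $\bx$-component of $P_L\bx$. Since $P_L=P_L^*=P_L^2$,
\[
\bx^*P_L\bx=\bx^*P_L^*P_L\bx=(P_L\bx)^*(P_L\bx)=\|P_L\bx\|^2=a^2 .
\]
As $\|\bx\|=1$, the projection of $P_L\bx$ onto the line $\bbC\bx$ is $(\bx^*P_L\bx)\bx=a^2\bx$, so writing $\bw\defeq P_L\bx-a^2\bx$ we have $\bx^*\bw=a^2-a^2=0$, i.e.\ $\bw\perp\bx$, and the orthogonal decomposition $P_L\bx=a^2\bx+\bw$. The second step computes $\|\bw\|$: since $a^2\bx\perp\bw$, the Pythagorean identity gives $\|\bw\|^2=\|P_L\bx\|^2-\|a^2\bx\|^2=a^2-a^4=a^2(1-a^2)$, hence $\|\bw\|=a\sqrt{1-a^2}>0$. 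Setting $\bv\defeq\bw/\|\bw\|$, a unit vector with $\bv^*\bx=0$, we obtain $P_L\bx=a^2\bx+a\sqrt{1-a^2}\,\bv$, and dividing through by $\|P_L\bx\|=a$ yields $P_L\bx/\|P_L\bx\|=a\bx+\sqrt{1-a^2}\,\bv$, which is the claim. (The statement's ``$\bv^*\bx=1$'' should read $\bv^*\bx=0$.)

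There is essentially no obstacle: the only points to watch are invoking $P_L^*P_L=P_L$ and excluding the degenerate values $a\in\{0,1\}$. For the downstream application I would append one sentence of justification: because the law of the random subspace $L$ is invariant under every unitary map of $\bbC^m$ fixing $\bx$, the conditional law of $\bv$ given $a$ is the uniform distribution on the unit sphere of the hyperplane $\{\bx\}^{\perp}$, and in particular $\bv$ may be taken independent of $a$; this is exactly what licenses treating $\bv_k'$ as a uniformly random unit vector in $R_k'$ (independent of the length $a$) in the proof of Lemma~\ref{lemma:approximate_basis}.
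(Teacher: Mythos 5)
Your proof is correct and takes essentially the same route as the paper's: both arguments reduce to the orthogonality of the residual, $(\bx-P_L\bx)\perp P_L\bx$, i.e.\ $\bx^*P_L\bx=\|P_L\bx\|^2=a^2$; you compute the $\bx$-coefficient directly (via $P_L=P_L^*P_L$) and construct $\bv$, whereas the paper posits the form $b\bx+\sqrt{1-b^2}\bv$ and solves for $b=a$. You are also right that the statement's ``$\bv^*\bx=1$'' is a typo for $\bv^*\bx=0$, and your closing remark on the conditional uniformity of $\bv$ is exactly the property used downstream in Lemma~\ref{lemma:approximate_basis}.
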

\begin{proof}
Since $\frac{P_L\bx}{\|P_L\bx\|}$ is a unit vector, we may assume that \begin{equation}\label{eq:assumption_projection}\frac{P_L\bx}{\|P_L\bx\|}= b\bx+\sqrt{1-b^2}\bv,\end{equation} where $\|\bv\|=1$ and $\bv^*\bx=0$. It remains to prove $a=b$.

By the definition of projection, we have $(\bx-P_L\bx)\perp P_L\bx$, i.e., $P_L\bx^*(\bx-P_L\bx)=0$. Plug in the assumption \eqref{eq:assumption_projection} and $\|P_L\bx\|=a$ we have
\[
\left(b\bx+\sqrt{1-b^2}\bv\right)^*\left((1-ab)\bx-a\sqrt{1-b^2}\bv\right)=0.
\]
With $\bv^*\bx=0$ and $\|\bx\|=\|\bv\|=1$, it implies $b(1-ab)=a(1-b^2)$ and $a=b$.
\end{proof}
\begin{lemma}\label{lemma:projection}
Given a  vector $\bx\in\reals^m$ and a random $n$-dimensional subspace $L$, then
\begin{align*}
&\Pr\left(\frac{1-\epsilon}{1+\epsilon}\leq \frac{m\|P_L\bx\|^2}{n\|\bx\|^2}\leq \frac{1+\epsilon}{1-\epsilon}\right)\geq \\&1-4\exp\left(-cn\min\left(\frac{\epsilon^2}{C^2},\frac{\epsilon}{C}\right)\right),
\end{align*}\end{lemma}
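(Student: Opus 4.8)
The plan is to reduce the claim to a two-sided concentration bound for chi-squared random variables. Since both sides of the asserted inequality are unchanged if $\bx$ is rescaled, I may assume $\|\bx\|=1$. Moreover, because $L$ is a \emph{uniformly} random $n$-dimensional subspace, the distribution of $\|P_L\bx\|$ is orthogonally invariant in $\bx$; equivalently, I may fix $L=\Sp(\be_1,\dots,\be_n)$ and let $\bx$ be a uniformly random unit vector in $\reals^m$. Writing such an $\bx$ as $\bg/\|\bg\|$ with $\bg=(g_1,\dots,g_m)$ having i.i.d.\ $N(0,1)$ coordinates gives the exact identity $\|P_L\bx\|^2=\big(\sum_{i=1}^n g_i^2\big)\big/\big(\sum_{i=1}^m g_i^2\big)$.

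Next I would introduce the \emph{independent} random variables $A=\sum_{i=1}^n g_i^2\sim\chi^2_n$ and $B=\sum_{i=n+1}^m g_i^2\sim\chi^2_{m-n}$, so that $\|P_L\bx\|^2=A/(A+B)$ while $A+B\sim\chi^2_m$. On the event $\{A\le(1+\epsilon)n\}\cap\{A+B\ge(1-\epsilon)m\}$ one has $\tfrac{m}{n}\tfrac{A}{A+B}\le\tfrac{1+\epsilon}{1-\epsilon}$, and on $\{A\ge(1-\epsilon)n\}\cap\{A+B\le(1+\epsilon)m\}$ one has $\tfrac{m}{n}\tfrac{A}{A+B}\ge\tfrac{1-\epsilon}{1+\epsilon}$. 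Hence (assuming $0<\epsilon<1$, the only nontrivial range) the complement of the event in the statement is contained in the union of four chi-squared tail events: two for $A\sim\chi^2_n$ and two for $A+B\sim\chi^2_m$.

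Finally I would bound each of the four events by the standard sub-exponential (Laurent--Massart type) tail bounds $\Pr(\chi^2_k\ge(1+\epsilon)k)\le\exp(-ck\min(\epsilon^2,\epsilon))$ and $\Pr(\chi^2_k\le(1-\epsilon)k)\le\exp(-ck\epsilon^2)$, which is standard chi-squared concentration (cf.\ Lemma~\ref{lemma:norm_gaussian}). Using $m\ge n$ to replace $m$ by $n$ in the exponents and absorbing absolute constants into $c$ and $C$, a union bound over the four events yields the factor $4$ and the exponent $-cn\min(\epsilon^2/C^2,\epsilon/C)$ claimed. When the lemma is later applied with a complex subspace of $\bbC^m$, one identifies $\bbC^m\cong\reals^{2m}$ and a complex $n$-dimensional subspace with the associated real $2n$-dimensional subspace; the ratios $\tfrac{n}{m}=\tfrac{2n}{2m}$ are unchanged and the exponent is only sharpened.

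This argument is entirely routine, so I do not anticipate a genuine obstacle; the only points that need a little care are (i) passing to the independent pieces $A$ and $A+B$ rather than manipulating the dependent ratio $A/(A+B)$ directly, and (ii) tracking constants so that the final exponent takes the stated $\min(\epsilon^2,\epsilon)$ form, which comes from the heavier right tail of $\chi^2_n$.
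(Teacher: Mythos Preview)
Your proposal is correct and follows essentially the same route as the paper: use rotational invariance to fix $L=\Sp(\be_1,\dots,\be_n)$ and take $\bx$ Gaussian, express the ratio $\|P_L\bx\|^2/\|\bx\|^2$ as $\big(\sum_{i\le n}|x_i|^2\big)/\big(\sum_{i\le m}|x_i|^2\big)$, apply the sub-exponential concentration of Lemma~\ref{lemma:norm_gaussian} separately to numerator and denominator, and union-bound the four one-sided deviations using $m\ge n$. The paper's version is terser (it does not explicitly split off the independent piece $B$), and it works directly with $CN(0,\bI)$ rather than passing through $\reals^{2m}$, but the argument is the same.
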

\begin{proof}
WLOG we may assume that $\bx\sim CN(0,\bI)$ and $L$ is the subspace spanned by the first $n$ standard basis $\be_1,\cdots,\be_n$. Then $\|\bx\|^2=\sum_{i=1}^m|x_i|^2$ and $\|P_L\bx\|^2=\sum_{i=1}^n|x_i|^2$. Applying Lemma~\ref{lemma:norm_gaussian}, we have
\begin{align*}
&\Pr\left((1-\epsilon)m\leq \sum_{i=1}^m|x_i|^2\leq (1+\epsilon)m\right)
\\\geq& 1-2\exp\left(-cm\min\left(\frac{\epsilon^2}{C^2},\frac{\epsilon}{C}\right)\right)\\
&\Pr\left((1-\epsilon)n\leq \sum_{i=1}^n|x_i|^2\leq (1+\epsilon)n\right)\\\geq &1-2\exp\left(-cn\min\left(\frac{\epsilon^2}{C^2},\frac{\epsilon}{C}\right)\right)
\end{align*}
Combining these two inequalities and $m\geq n$, the lemma is proved.
\end{proof}
\begin{lemma}\label{lemma:complexgaussian}
For $x\sim CN(0,1)$ and any $r>0$, $\Pr(|x|\leq r)<r^2$.
\end{lemma}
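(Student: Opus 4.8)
The plan is to compute the law of $|x|$ explicitly. Writing $x = a + \iu b$ with $a,b$ independent real Gaussians of variance $\tfrac12$ (this is what $CN(0,1)$ means in this paper), the joint density of $(a,b)$ on $\reals^2$ is $\tfrac{1}{\pi}\econst^{-(a^2+b^2)} = \tfrac{1}{\pi}\econst^{-|x|^2}$. Passing to polar coordinates $(\rho,\theta)$ with $\rho = |x|$, the induced density of $\rho$ on $(0,\infty)$ is $2\rho\,\econst^{-\rho^2}$, so that
\[
\Pr(|x| \leq r) = \int_0^r 2\rho\,\econst^{-\rho^2}\idiff{\rho} = 1 - \econst^{-r^2};
\]
equivalently, $|x|^2$ is exponentially distributed with rate $1$.

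It then remains only to invoke the elementary inequality $1 - \econst^{-t} < t$ for every $t>0$, which follows from strict convexity of $t\mapsto \econst^{-t}$ (the chord from $0$ lies strictly above the graph on $(0,\infty)$), or directly from the Taylor expansion of $\econst^{-t}$. Applying it with $t = r^2$ yields $\Pr(|x|\leq r) = 1 - \econst^{-r^2} < r^2$, which is the claim.

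There is no genuine obstacle here; the only thing one must be careful about is the normalization convention for the complex normal distribution (variance $\tfrac12$ per real coordinate), since it is precisely this choice that makes the Jacobian work out to give $|x|^2 \sim \mathrm{Exp}(1)$ and hence the clean bound $r^2$ rather than a constant multiple of it.
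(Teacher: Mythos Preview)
Your proof is correct. The paper's argument is slightly different in its final step: after writing the density of $(a,b)$ as $\tfrac{1}{\pi}\econst^{-(a^2+b^2)}$, the paper simply bounds this density above by its maximum value $\tfrac{1}{\pi}$ and multiplies by the area $\pi r^2$ of the disk, obtaining $\Pr(|x|\leq r)<\pi r^2\cdot\tfrac{1}{\pi}=r^2$ directly. You instead carry out the polar integral exactly to get $\Pr(|x|\leq r)=1-\econst^{-r^2}$ and then apply $1-\econst^{-t}<t$. Your route yields strictly more information (the exact law $|x|^2\sim\mathrm{Exp}(1)$), while the paper's density bound is a one-line shortcut; either way the lemma is immediate.
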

\begin{proof}
By the definition of $CN(0,1)$, $\Pr(|x|\leq r)$ is the equivalent to $\Pr(\|\by\|\leq r)$ for $\by\in\reals^2$ and sampled from $N(0,\bI_{2\times 2}/2)$, which has a probability density function of $\frac{1}{\pi}\exp(-\|\by\|^2)$. This function is maximized at $\by=\mathbf{0}$  with a value of $1/\pi$, and as a result, $\Pr(\|\by\|\leq r)<\pi r^2 \cdot \frac{1}{\pi}=r^2$.\end{proof}

\begin{lemma}\label{lemma:pertubation_sum}
Given a vector $\bx\in \reals^m$  and  $\bx\sim CN(0,\bI_{m\times m})$. If  $\by\in \reals^m$ satisfies $\frac{1}{m}\sum_{i=1}^m|y_i|^2\leq t^2$ and $t\geq n/m$, then with probability at least $1-m\exp(-n/6)$, we have
\[
\frac{1}{m}\sum_{i=1}^m\max\left(\frac{|y_i|}{|x_i|},1\right)\leq (4+\sqrt{2}l) t
\]
for $l=\left \lfloor{-\log_2t }\right \rfloor. $
\end{lemma}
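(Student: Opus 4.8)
The quantity this lemma is really used to control — in the proof of Lemma~\ref{lemma:pertubation}, via $|\phase(x_i+y_i)-\phase(x_i)|\le 2\min(|y_i|/|x_i|,1)$ — is $\frac1m\sum_i\min(|y_i|/|x_i|,1)$, and the displayed inequality is to be read with $\min$ in place of $\max$ (with $\max$ its left side is always $\ge 1$, whereas for $\by=\mathbf 0$, $t=n/m$ its right side tends to $0$ as $m/n\to\infty$). The plan is therefore to prove the slightly stronger statement
\[
\Pr\Big(\tfrac1m\sum_{i=1}^m\min\big(\tfrac{|y_i|}{|x_i|},\,1\big)\ge 4t\Big)\le \exp(-\tfrac34 n)\le m\exp(-\tfrac n6),
\]
which yields the assertion since $l=\lfloor-\log_2 t\rfloor\ge0$ whenever $t\le1$ (the regime in which the lemma is invoked; for $t>1$ the trivial bound $\frac1m\sum_i\min(\cdot,1)\le1$ already gives more). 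The key point is that $Z_i:=\min(|y_i|/|x_i|,1)$ are independent and $[0,1]$-valued, so it suffices to estimate $\Expect\sum_iZ_i$ and then concentrate.

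For the mean I would use the layer-cake identity together with Lemma~\ref{lemma:complexgaussian} ($\Pr(|x_i|<r)<r^2$):
\[
\Expect Z_i=\int_0^1\Pr\big(|x_i|<|y_i|/s\big)\,ds\le\int_0^1\min\big(|y_i|^2/s^2,\,1\big)\,ds\le 2|y_i|,
\]
and then Cauchy--Schwarz with the hypothesis $\sum_i|y_i|^2\le mt^2$ gives $\Expect\sum_iZ_i\le 2\sum_i|y_i|\le 2\sqrt m\,(\sum_i|y_i|^2)^{1/2}\le 2mt$. Moreover $\sum_i\mathrm{Var}(Z_i)\le\sum_i\Expect Z_i^2\le\sum_i\Expect Z_i\le 2mt$, since $Z_i\in[0,1]$.

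For the concentration step, $Z_i-\Expect Z_i$ are independent, bounded by $1$ in modulus, with total variance $\le 2mt$, so Bernstein's inequality gives $\Pr(\sum_iZ_i\ge\Expect\sum_iZ_i+u)\le\exp\big(-\tfrac{u^2}{2(2mt+u/3)}\big)$ for all $u>0$. Taking $u=2mt$ makes the exponent equal to $\tfrac34 mt$, which is $\ge\tfrac34 n$ because $t\ge n/m$; hence with probability at least $1-\exp(-\tfrac34 n)\ge 1-m\exp(-\tfrac n6)$ one has $\sum_iZ_i<\Expect\sum_iZ_i+2mt\le 4mt$, i.e.\ $\frac1m\sum_iZ_i<4t$, as required.

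If one instead wants to reproduce the exact constants $4+\sqrt2\,l$ and the stated failure probability, the route is the dyadic one: partition the indices by $|x_i|\in(2^{-k-1},2^{-k}]$, bound each block's cardinality by Lemma~\ref{lemma:complexgaussian} ($\Pr(|x_i|\le2^{-k})<4^{-k}$) plus a Chernoff bound with a floor of order $n$ (legitimate because $t\ge n/m$), apply Cauchy--Schwarz inside each block, and union-bound over the $O(\log m+n)$ relevant scales; the $\lesssim l$ blocks with $|x_i|$ between $\sim t$ and $1$ each contribute $\sqrt2\,t$, the coordinates with $|x_i|\lesssim t$ contribute $\le 4t$ through $Z_i\le1$, and the union bound gives the probability $m\exp(-n/6)$. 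I expect the only genuinely delicate point, in either route, to be the heavy tail of $1/|x_i|$ near the origin ($\Expect 1/|x_i|^2=\infty$): one cannot bound $\sum_i|y_i|/|x_i|$ by Cauchy--Schwarz against $\sum_i1/|x_i|^2$, so the truncation by $1$ is indispensable, and the contribution of the rare coordinates with very small $|x_i|$ must be routed through $Z_i\le1$ together with a count of how many such coordinates occur — which is exactly what is subsumed, in the Bernstein argument, in the estimates $\Expect Z_i\le 2|y_i|$ and $Z_i\le1$.
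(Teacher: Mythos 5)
Your proof is correct, and it takes a genuinely different route from the paper. The paper argues through order statistics: after sorting so that $|x_1|\leq\cdots\leq |x_m|$, it uses its Lemma~\ref{lemma:order} (a Chernoff bound on the number of small $|x_i|$) to get $|x_j|>\sqrt{j/2m}$ simultaneously for all $j\geq n$ with probability $1-m\exp(-n/6)$, then bounds the sum over dyadic blocks $j+1\leq i\leq 2j$ by Cauchy--Schwarz within each block (each block contributing $\sqrt{2}\,mt$, whence the $\sqrt{2}\,l$ term), and handles the first $tm$ indices by the trivial bound $\min(\cdot,1)\leq 1$ --- i.e., exactly the ``dyadic route'' you sketch at the end to recover the constants $(4+\sqrt{2}\,l)$. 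Your main argument instead truncates coordinatewise, $Z_i=\min(|y_i|/|x_i|,1)$, computes $\Expect Z_i\leq 2|y_i|$ by the layer-cake identity together with the small-ball bound of Lemma~\ref{lemma:complexgaussian}, and then applies Bernstein's inequality with $u=2mt$ and variance proxy $2mt$; this is valid (the truncation is precisely what tames the non-integrable tail of $1/|x_i|^2$), and it buys a strictly cleaner conclusion: the bound $4t$ with no logarithmic factor $l$, and failure probability $\exp(-3n/4)$ rather than $m\exp(-n/6)$, which would in fact shave a $\log$ factor off the constant appearing in Lemma~\ref{lemma:pertubation}. You are also right about the two reading issues, and the paper's own proof confirms both: the $\max$ in the statement must be $\min$ (its inequality $\sum_{i\leq tm}\min(|y_i|/|x_i|,1)\leq tm$ and the application via Lemma~\ref{lemma:pertubation_uni} only make sense for $\min$), and the implicit restriction $t\leq 1$ is shared with the paper, whose proof defines $l$ as the largest integer with $2^l t<1$.
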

\begin{proof}
WLOG we may rearrange the indices and assume that $|x_1|\leq |x_2|\leq \cdots \leq |x_n|$. Then Lemma~\ref{lemma:order} implies that
\begin{equation}\label{eq:xj}
\Pr(|x_j| > \sqrt{j/2m})\geq 1-\exp(-j/6).
\end{equation}
Applying a union bound,
\begin{equation}\label{eq:xj1}
\Pr\left(|x_j| > \sqrt{j/2m}\,\,\text{for all $j\geq n$}\right)\geq 1-m\exp(-n/6).
\end{equation}

When the event in \eqref{eq:xj1} holds, for all $j\geq n$ we have
\begin{equation}\label{eq:interval}
\sum_{i=j+1}^{2j}\frac{|y_i|}{|x_i|}\leq \frac{1}{|x_j|}\sum_{i=j+1}^{2j}|y_i|
\leq \frac{1}{|x_j|}\sqrt{j\sum_{i=j+1}^{2j}{|y_i|^2}}\leq mt\sqrt{2}.
\end{equation}
Combining \eqref{eq:interval} for $j=tm, 2tm, 4tm,\cdots, 2^ltm$ ($l$ is the largest integer such that $2^lt<1$) and $j=m/2$, we have
\begin{equation}\label{eq:interval1}
\sum_{i=tm+1}^{m}\frac{|y_i|}{|x_i|}\leq (2+l) mt\sqrt{2}.
\end{equation}
In addition, it is clear that \begin{equation}\label{eq:interval2}
\sum_{i=1}^{tm}\max\left(\frac{|y_i|}{|x_i|},1\right)\leq tm.\end{equation}
Combining \eqref{eq:interval1} and \eqref{eq:interval2}, the lemma is proved. (integer issue?)
\end{proof}

\begin{lemma}\label{lemma:order}
For a  vector  $\bx\in\bC^m$ and $\bx\sim \sum CN(0,\bI_{m\times m})$, we have
\[
\Pr\left(\sum_{i=1}^mI(|x_i|\leq r)<2r^2m\right)>1-\exp(-r^2m/3).
\]
\end{lemma}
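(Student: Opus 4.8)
The quantity $\sum_{i=1}^m I(|x_i|\le r)$ is a sum of i.i.d.\ Bernoulli random variables, so the plan is simply to apply a multiplicative Chernoff bound. First I would set $B_i=I(|x_i|\le r)$ and $S=\sum_{i=1}^m B_i$; since the coordinates of $\bx$ are i.i.d.\ $CN(0,1)$, the $B_i$ are i.i.d.\ Bernoulli with success probability $p=\Pr(|x_1|\le r)$, and Lemma~\ref{lemma:complexgaussian} gives $p\le r^2$ for every $r>0$. (If $r\ge 1$ the event $\{S<2r^2m\}$ holds trivially since $S\le m<2r^2m$, so nothing needs proving there; the argument below in fact works for all $r>0$.)

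Next I would run the exponential moment method. For any $\theta>0$,
\[
\Pr(S\ge 2r^2m)\le e^{-2\theta r^2m}\,\Expect e^{\theta S}
=e^{-2\theta r^2m}\prod_{i=1}^m\bigl(1+p(e^\theta-1)\bigr)
\le \exp\bigl(-2\theta r^2m+mp(e^\theta-1)\bigr),
\]
where the last step uses $1+x\le e^x$. Since $mp\le r^2m$ and $e^\theta-1>0$, the right-hand side is at most $\exp\bigl(r^2m(e^\theta-1-2\theta)\bigr)$.

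Finally I would optimize the exponent: the function $\theta\mapsto e^\theta-1-2\theta$ attains its minimum at $\theta=\log 2$, where it equals $1-2\log 2\approx -0.386<-\tfrac{1}{3}$. Hence $\Pr(S\ge 2r^2m)\le \exp\bigl((1-2\log 2)r^2m\bigr)<\exp(-r^2m/3)$, and taking complements gives the stated bound. The whole argument is elementary; the only point requiring any care is checking that the numerical constant $1-2\log 2$ lies strictly below $-1/3$ (the cruder choice $\theta=1$ only yields $e-3\approx-0.28$, which is not enough), so $\theta=\log 2$, or any value in a neighbourhood of it, must be used. I do not anticipate a genuine obstacle here — this is a routine binomial-tail estimate, invoked afterwards (with $r=\sqrt{j/2m}$) inside the proof of Lemma~\ref{lemma:pertubation_sum}.
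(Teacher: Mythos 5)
Your proof is correct and follows essentially the same route as the paper: reduce to a Bernoulli sum with success probability $p=\Pr(|x_1|\le r)\le r^2$ via Lemma~\ref{lemma:complexgaussian} and then apply a multiplicative Chernoff bound at threshold $2r^2m$. The only difference is cosmetic — the paper invokes its pre-packaged Lemma~\ref{lemma:bernoullis} (with $\delta=2r^2/p-1>1$), whereas you rederive the bound from the moment generating function and optimize at $\theta=\log 2$, which also makes the constant check $1-2\log 2<-1/3$ explicit.
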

\begin{proof}
We apply Lemma~\ref{lemma:bernoullis} with $p=\Pr(|x_1|\leq r)$ and $\delta=2r^2/p-1$. Applying Lemma~\ref{lemma:complexgaussian},
\[
\delta p=2r^2-\Pr(|x_1|\leq r)>r^2,
\]
which implies Lemma~\ref{lemma:order}.
\end{proof}

\begin{lemma}\label{lemma:pertubation_uni}
For any complex number $x, y\in\bbC$, $|\mathrm{phase}(x+y)-\mathrm{phase}(x)|\leq \min(2\frac{|y|}{|x|},2)$. Similarly, for any vector $\bu,\bv\in\bbC^m$, $\|\frac{\bu+\bv}{\|\bu+\bv\|}-\frac{\bv}{\|\bv\|}\|\leq\min(2\frac{\|\bu\|}{\|\bv\|},2)$.
\end{lemma}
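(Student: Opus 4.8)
The plan is to deduce both assertions from one elementary normalization inequality: for any nonzero $\bp,\bq$ lying in a complex inner product space — the scalar statement being the case of $\bbC$ itself, where $\|\cdot\|=|\cdot|$ —
\[
\left\|\frac{\bp}{\|\bp\|}-\frac{\bq}{\|\bq\|}\right\|\le \min\!\left(\frac{2\|\bp-\bq\|}{\|\bp\|},\,2\right).
\]
Granting this, the first claim follows by taking $\bp=x$ and $\bq=x+y$, since $\mathrm{phase}(z)=z/|z|$, $\|\bp-\bq\|=|y|$, and $\|\bp\|=|x|$; the second claim follows by taking $\bp=\bv$ and $\bq=\bu+\bv$, so that $\|\bp-\bq\|=\|\bu\|$ and $\|\bp\|=\|\bv\|$.

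To prove the displayed inequality I would argue as follows. The bound by $2$ is immediate: $\bp/\|\bp\|$ and $\bq/\|\bq\|$ are unit vectors, so by the triangle inequality their difference has norm at most $2$. For the other bound, I would use the decomposition
\[
\frac{\bp}{\|\bp\|}-\frac{\bq}{\|\bq\|}
=\frac{\bp-\bq}{\|\bp\|}+\bq\left(\frac{1}{\|\bp\|}-\frac{1}{\|\bq\|}\right),
\]
take norms, and bound the second term using $\bigl|\tfrac{1}{\|\bp\|}-\tfrac{1}{\|\bq\|}\bigr|=\tfrac{\,\bigl|\,\|\bq\|-\|\bp\|\,\bigr|\,}{\|\bp\|\,\|\bq\|}$ together with the reverse triangle inequality $\bigl|\,\|\bq\|-\|\bp\|\,\bigr|\le\|\bp-\bq\|$. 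Since $\|\bq\|$ cancels the factor $\|\bq\|$ in the denominator, this gives $\tfrac{\|\bp-\bq\|}{\|\bp\|}+\tfrac{\|\bp-\bq\|}{\|\bp\|}=\tfrac{2\|\bp-\bq\|}{\|\bp\|}$, completing the argument. The same two displayed lines apply verbatim in the one-dimensional case $\bbC$, which yields the statement for $\mathrm{phase}$.

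There is no real obstacle here — the statement is a routine estimate on how normalization distorts an additive perturbation — and the only point needing (trivial) care is that $x$, $x+y$, $\bv$, and $\bu+\bv$ are assumed nonzero, which is implicit in the statement so that the phases and normalized vectors are well defined.
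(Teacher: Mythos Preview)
Your proof is correct. The algebraic decomposition
\[
\frac{\bp}{\|\bp\|}-\frac{\bq}{\|\bq\|}
=\frac{\bp-\bq}{\|\bp\|}+\bq\left(\frac{1}{\|\bp\|}-\frac{1}{\|\bq\|}\right)
\]
followed by the reverse triangle inequality gives exactly the bound $2\|\bp-\bq\|/\|\bp\|$, and the specializations $\bp=x$, $\bq=x+y$ (scalar case) and $\bp=\bv$, $\bq=\bu+\bv$ (vector case) recover both statements at once.

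The paper takes a different, geometric route: it normalizes to $x=1$, $|y|=r$, and in the complex plane views $x+y$ as a point on the circle of radius $r$ about $1$; for $r<1$ it locates the extremal configuration (the tangent line from the origin) and computes $|e^{i\theta}-1|=2\sin(\theta/2)\le\sqrt{2}\,r$ with $\theta=\sin^{-1}r$. This actually yields the sharper constant $\sqrt{2}$ rather than $2$, though the lemma as stated only records $2$. Your argument is shorter, works verbatim in any normed space, and handles the vector statement without a separate reduction; the paper's argument is specific to the planar picture but, as a by-product, shows the constant can be improved. Either approach suffices for how the lemma is used later.
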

\begin{proof}
WLOG we only need to prove the first sentence and we may assume that $x=1$ and $|y|=r$. Then $\mathrm{phase}(x)=e^{i0}=1$, and on the complex plane, $x+y$ lies on a circle center at $1$ with radius $r$.

\begin{figure}\label{fig:lemma}
  \centering
    \includegraphics[width=0.35\textwidth]{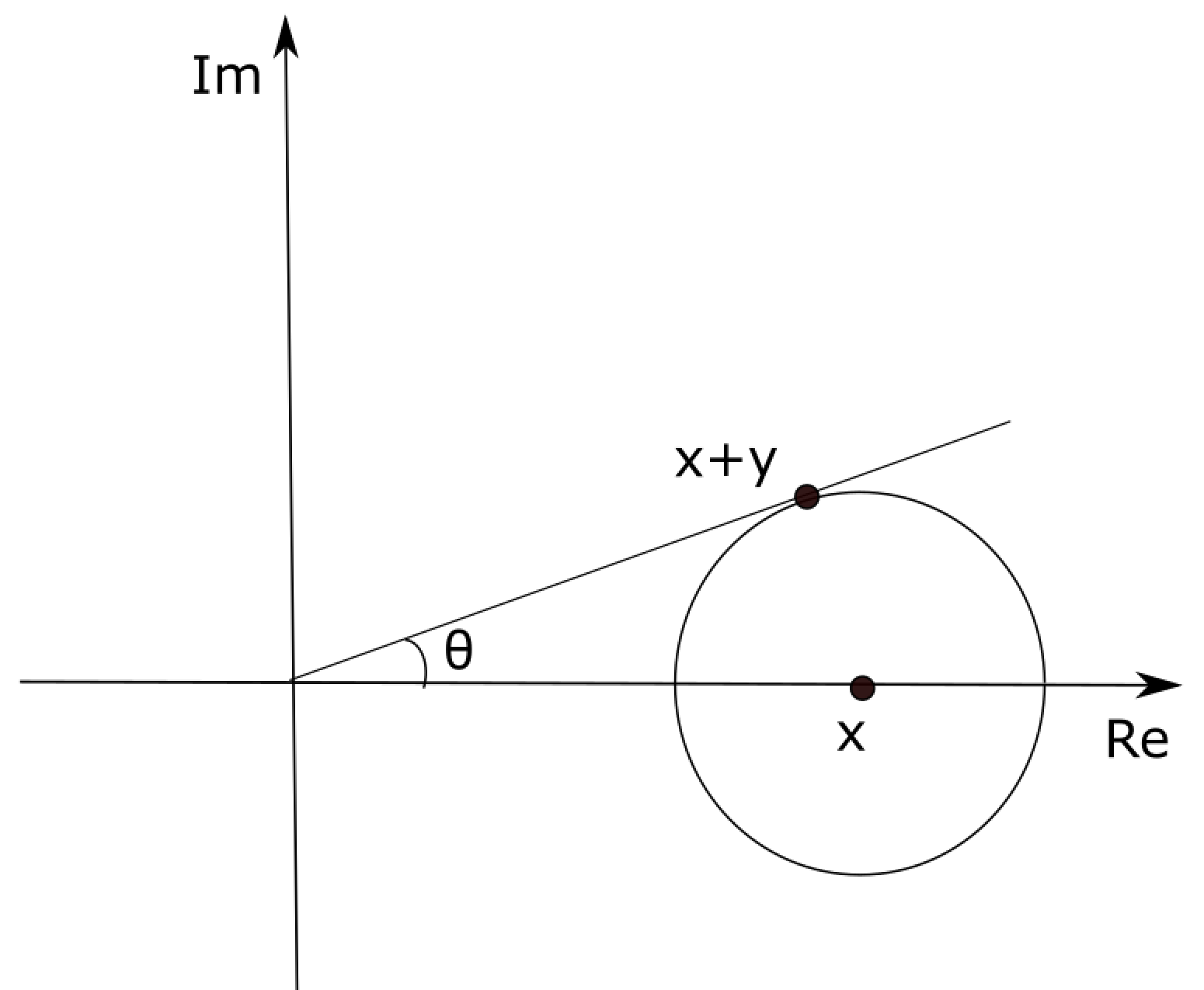}
      \caption{Visualization of the proof of Lemma~\ref{lemma:pertubation_uni} when $r<1$.}
\end{figure}
When $r\geq 1$, $|\mathrm{phase}(x+y)-\mathrm{phase}(x)|$ is maximized when $y=-r$ and $\mathrm{phase}(x+y)=-1$, then we have $|\mathrm{phase}(x+y)-\mathrm{phase}(x)|=2$.

When $r<1$, we would like to find a point on the circle such that its direction is as far from the direction of x-axis as possible. As visualized in Figure~\ref{fig:lemma}, $|\mathrm{phase}(x+y)-\mathrm{phase}(x)|$ is achieved when the line connecting $x+y$ and the origin is tangent to the circle. It implies that the maximal value is $|e^{i\theta}-1|$, where $\theta=\sin^{-1}r$. Then we have the estimation $|e^{i{\theta}}-1|=2\sin(\theta/2)= \sin(\theta)/\cos(\theta/2)\leq \sqrt{2}\sin(\theta)=\sqrt{2}r$ (the inequality uses the fact that $\theta\leq \pi/2$).

Combining these two cases, Lemma~\ref{lemma:pertubation_uni} is proved.
\end{proof}

\begin{lemma}[Sum of sub-gaussian variables, Proposition 5.10 in~\cite{vershynin2010introduction}]\label{lemma:subgaussian}
Given $X_1,\cdots,X_n$ i.i.d. from a distribution with zero mean and sub-gaussian norm defined by $\|X\|_{\psi_2}=\sup_{p\geq 1}p^{-1/2}(\Expect |X|^p)^{1/p}$, then
\[
\Pr\left(\left|\frac{1}{n}\sum_{i=1}^nX_i\right|\geq t\right)\leq  \exp\left(-\frac{cnt^2}{\|X\|_{\psi_2}^2}+1\right)
\]
\end{lemma}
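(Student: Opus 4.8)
The plan is to show that the normalized sum $S_n=\frac1n\sum_{i=1}^n X_i$ is itself sub-gaussian, with parameter of order $\|X\|_{\psi_2}/\sqrt n$, and then to read off the claimed tail bound from a one-line Chernoff argument. Write $K=\|X\|_{\psi_2}$, so that the hypothesis gives $(\Expect|X|^p)^{1/p}\le K\sqrt p$ for every $p\ge 1$, together with $\Expect X=0$.

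The first step, which is the crux, is to convert this moment bound into the moment generating function estimate $\Expect\exp(\lambda X)\le\exp(CK^2\lambda^2)$, valid for $|\lambda|$ below an absolute multiple of $1/K$ (an absolute constant $C$). The mechanism is the Taylor expansion $\Expect e^{\lambda X}=1+\sum_{p\ge 2}\lambda^p\Expect X^p/p!$, where the linear term vanishes because $\Expect X=0$; bounding it termwise by $\sum_{p\ge 2}|\lambda|^pK^pp^{p/2}/p!$ and using $p!\ge(p/e)^p$, this is dominated by $\sum_{p\ge 2}\bigl(eK|\lambda|/\sqrt p\bigr)^p$, a geometric-type series that converges and is comparable to $K^2\lambda^2$ once $|\lambda|\le c_0/K$ for a small absolute $c_0$; then $1+u\le e^u$ finishes. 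For larger $\lambda$ one can argue separately, but in fact the Chernoff step below only ever needs the small-$\lambda$ regime.

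The second step tensorizes: by independence, $\Expect\exp(\lambda S_n)=\prod_{i=1}^n\Expect\exp(\tfrac{\lambda}{n}X_i)\le\exp(CK^2\lambda^2/n)$ whenever $|\lambda|/n\le c_0/K$, so $S_n$ is sub-gaussian with parameter $O(K/\sqrt n)$. The third step is Markov applied to $e^{\lambda S_n}$: $\Pr(S_n\ge t)\le\exp(CK^2\lambda^2/n-\lambda t)$, optimized at $\lambda=nt/(2CK^2)$, which lies in the admissible range precisely when $t\lesssim K$ and otherwise makes the tail trivially small; this yields $\Pr(S_n\ge t)\le\exp(-cnt^2/K^2)$, and the same bound for $-S_n$. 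A union bound contributes a factor $2$, and since $2\le e$ this is $\exp(-cnt^2/\|X\|_{\psi_2}^2+1)$, as stated.

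The only genuinely delicate point is the first step — passing from the moment definition of $\|\cdot\|_{\psi_2}$ to the exponential/MGF bound with clean constants and the correct range $|\lambda|\lesssim 1/K$; everything after that is bookkeeping, and the unspecified constant $c$ and the additive slack of $1$ in the target exponent leave ample room to absorb constants. This lemma is exactly \cite[Proposition 5.10]{vershynin2010introduction}, so one may alternatively invoke it directly rather than reproduce the argument.
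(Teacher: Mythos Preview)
Your sketch is correct and follows the standard argument from \cite{vershynin2010introduction}; the paper itself provides no proof for this lemma beyond citing that reference, so your write-up is in fact more detailed than what the paper offers. The one place worth tightening is the large-$t$ regime (where the optimizing $\lambda$ falls outside the small-$|\lambda|$ window), but Vershynin's Lemma~5.5 establishes the MGF bound $\Expect e^{\lambda X}\le\exp(C K^2\lambda^2)$ for \emph{all} $\lambda\in\reals$ once $X$ is centered, which removes that caveat entirely.
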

\begin{lemma}[Sum of sub-exponential variables, Corollary 5.17 in~\cite{vershynin2010introduction}]\label{lemma:subexponential}
Given $X_1,\cdots,X_n$ i.i.d. from a distribution with zero mean and sub-exponential norm defined by $\|X\|_{\psi_1}=\sup_{p\geq 1}p^{-1}(\Expect |X|^p)^{1/p}$, then
\[
\Pr\left(\left|\frac{1}{n}\sum_{i=1}^nX_i\right|\geq t\right)\leq  2\exp\left(-cn\min\left(\frac{t^2}{\|X\|_{\psi_1}^2},\frac{t}{\|X\|_{\psi_1}}\right)\right)
\]
\end{lemma}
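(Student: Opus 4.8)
The plan is to observe that this is essentially Corollary 5.17 of \cite{vershynin2010introduction}, so strictly speaking nothing needs to be reproved; but for completeness I would give the standard Bernstein-type Chernoff argument, which is short. Since the result is used elsewhere in the paper only as a black-box concentration tool, simply citing \cite{vershynin2010introduction} would also suffice.

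First I would record the moment generating function estimate for a single centered summand: if $\Expect X=0$ and $K=\|X\|_{\psi_1}$, then there are absolute constants $c_1,C_1>0$ with $\Expect e^{\lambda X}\le e^{C_1K^2\lambda^2}$ for all $|\lambda|\le c_1/K$. This follows by writing $e^{\lambda X}=1+\lambda X+\sum_{p\ge 2}\lambda^pX^p/p!$, taking expectations (the linear term vanishes by centering), bounding $\Expect|X|^p\le (pK)^p$ from the definition of $\|\cdot\|_{\psi_1}$, and summing the resulting series, which converges geometrically once $|\lambda|K$ is below an absolute threshold.

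Next, by independence $\Expect\exp(\lambda\sum_{i=1}^n X_i)=\prod_{i=1}^n\Expect e^{\lambda X_i}\le e^{C_1K^2n\lambda^2}$ on the same range of $\lambda$, and Markov's inequality gives, for $0\le\lambda\le c_1/K$,
\[
\Pr\Big(\tfrac1n\sum_{i=1}^nX_i\ge t\Big)\le \exp\!\big(-\lambda n t+C_1K^2n\lambda^2\big).
\]
I would then optimize over $\lambda$ in two regimes: when $t$ is small relative to $K$, the unconstrained minimizer $\lambda=t/(2C_1K^2)$ is admissible and yields exponent $-nt^2/(4C_1K^2)$; when $t$ is larger, the optimum sits at the boundary $\lambda=c_1/K$ and yields an exponent of order $-nt/K$. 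Merging the two cases gives the upper-tail bound $\exp(-cn\min(t^2/K^2,t/K))$, and applying the same estimate to $-X_i$ together with a union bound over the two tails produces the stated two-sided inequality with the factor $2$.

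I do not anticipate any genuine obstacle: the only point requiring slight care is confirming that the absolute constant from the boundary regime $\lambda=c_1/K$ propagates cleanly into the final $\min(t^2/K^2,t/K)$ form, which is a routine case check.
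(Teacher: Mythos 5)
Your proposal is correct: the paper itself offers no proof of this lemma, simply citing Corollary 5.17 of \cite{vershynin2010introduction}, and your Bernstein-type Chernoff argument (MGF bound $\Expect e^{\lambda X}\le e^{C_1K^2\lambda^2}$ for $|\lambda|\le c_1/K$, independence, two-regime optimization over $\lambda$, and a union bound over the two tails) is exactly the standard argument behind that cited result. No gaps; citing the reference, as the paper does, would also have sufficed.
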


\begin{lemma}\label{lemma:bernoullis}
$X_1,X_2,\cdots$ are i.i.d. Bernoulli variables with expectation $p$, then for any $\delta>1$,
\[
\Pr(\sum_{i=1}^mX_i>(1+\delta)pm)\leq \exp(-m\delta p/3).
\]
\end{lemma}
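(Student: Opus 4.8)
The plan is to prove Lemma~\ref{lemma:bernoullis} by the standard Chernoff (exponential moment) method, since the statement is a classical multiplicative tail bound for sums of Bernoulli variables. Write $S=\sum_{i=1}^m X_i$ and $\mu=pm$. First I would apply Markov's inequality to $e^{tS}$ for an arbitrary $t>0$:
\[
\Pr\!\left(S>(1+\delta)\mu\right)\leq e^{-t(1+\delta)\mu}\,\Expect e^{tS}.
\]
Then I would bound the moment generating function of a single Bernoulli variable via $1+x\leq e^{x}$:
\[
\Expect e^{tX_i}=1-p+pe^{t}=1+p(e^{t}-1)\leq \exp\!\bigl(p(e^{t}-1)\bigr),
\]
so that independence gives $\Expect e^{tS}\leq \exp\!\bigl(\mu(e^{t}-1)\bigr)$ and hence
\[
\Pr\!\left(S>(1+\delta)\mu\right)\leq \exp\!\bigl(\mu(e^{t}-1)-t(1+\delta)\mu\bigr).
\]

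Next I would optimize the exponent over $t$. Its derivative in $t$ is $\mu e^{t}-(1+\delta)\mu$, which vanishes at $t=\log(1+\delta)$ (positive since $\delta>1$), and the second derivative is positive, so this is the minimizer. Substituting back yields the familiar form
\[
\Pr\!\left(S>(1+\delta)\mu\right)\leq \left(\frac{e^{\delta}}{(1+\delta)^{1+\delta}}\right)^{\!\mu}=\exp\!\Bigl(\mu\bigl(\delta-(1+\delta)\log(1+\delta)\bigr)\Bigr).
\]

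It then remains to show that the exponent is at most $-\mu\delta/3$ for $\delta\geq 1$, i.e.\ the elementary inequality $(1+\delta)\log(1+\delta)\geq \tfrac{4}{3}\delta$. I would verify this with a one-variable calculus argument: set $h(\delta)=(1+\delta)\log(1+\delta)-\tfrac{4}{3}\delta$, so that $h'(\delta)=\log(1+\delta)-\tfrac{1}{3}\geq \log 2-\tfrac{1}{3}>0$ for $\delta\geq 1$; thus $h$ is increasing on $[1,\infty)$, and $h(1)=2\log 2-\tfrac{4}{3}>0$, whence $h(\delta)>0$ throughout. Plugging this back gives $\Pr(S>(1+\delta)\mu)\leq e^{-\mu\delta/3}=e^{-mp\delta/3}$, which is the claim. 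There is no genuine obstacle here — every step is a textbook manipulation; the only point worth noting is that the crude constant $1/3$ in the exponent is precisely what makes the closing inequality hold over the entire range $\delta\geq 1$, and the monotonicity check above confirms it.
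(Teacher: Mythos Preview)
Your proof is correct and follows essentially the same route as the paper: both reduce to the multiplicative Chernoff bound $\Pr(S>(1+\delta)\mu)\le\bigl(e^{\delta}/(1+\delta)^{1+\delta}\bigr)^{\mu}$ together with the elementary inequality $(1+\delta)\log(1+\delta)\ge\tfrac{4}{3}\delta$ for $\delta\ge1$. The only difference is that the paper simply cites the Chernoff bound from the literature, whereas you derive it from scratch via the exponential-moment method; your calculus verification of the auxiliary inequality is exactly what the paper leaves implicit.
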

\begin{proof}
It follows from \cite[Theorem 4,4]{chernoff} and the observation that when $\delta>1$, $(1+\delta)\log(1+\delta)>\frac{4}{3}\delta$.
\end{proof}

\begin{lemma}\label{lemma:norm_gaussian}
For $\bv\sim CN(0,\bI)$, $\Pr(\frac{1}{m}|\|\bv\|^2-1|>t)<2\exp\left(-cm\min\left(\frac{t^2}{C^2},\frac{t}{C}\right)\right)$
\end{lemma}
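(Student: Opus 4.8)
The plan is to write $\|\bv\|^2 = \sum_{i=1}^m |v_i|^2$ as a sum of $m$ i.i.d.\ centered sub-exponential random variables and then invoke the sum-of-sub-exponentials concentration inequality, Lemma~\ref{lemma:subexponential}.

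First I would identify the distribution of a single coordinate: writing the real and imaginary parts of $v_i$ as independent $N(0,1/2)$ random variables, $|v_i|^2$ is a sum of two squared $N(0,1/2)$'s and is therefore exponentially distributed with mean $1$. In particular $\Expect|v_i|^2 = 1$, so $\Expect\|\bv\|^2 = m$, and the variables $X_i := |v_i|^2 - 1$ are i.i.d., centered, and sub-exponential. Their sub-exponential norm (in the sense of Lemma~\ref{lemma:subexponential}) is an absolute constant: from $\Expect|v_i|^{2p} = p!$ one gets $\bigl(\Expect|v_i|^{2p}\bigr)^{1/p} = (p!)^{1/p} \le p$ for all $p \ge 1$, hence $\bigl\||v_i|^2\bigr\|_{\psi_1} \le 1$, and by the triangle inequality for $\|\cdot\|_{\psi_1}$ (Minkowski), $\|X_i\|_{\psi_1} \le \bigl\||v_i|^2\bigr\|_{\psi_1} + \|1\|_{\psi_1} \le 2 =: C$.

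Then I would apply Lemma~\ref{lemma:subexponential} with $n = m$ and this choice of $\{X_i\}$. Since $\frac{1}{m}\sum_{i=1}^m X_i = \frac{1}{m}\|\bv\|^2 - 1$, the lemma gives
\begin{align*}
\Pr\left(\frac{1}{m}\bigl|\|\bv\|^2 - 1\bigr| \ge t\right)
&\le 2\exp\left(-cm\min\left(\frac{t^2}{\|X_i\|_{\psi_1}^2}, \frac{t}{\|X_i\|_{\psi_1}}\right)\right) \\
&\le 2\exp\left(-cm\min\left(\frac{t^2}{C^2}, \frac{t}{C}\right)\right),
\end{align*}
which is exactly the claimed bound.

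I do not expect a real obstacle here: the only computation of substance is the uniform bound on the $\psi_1$-norm of the exponential (equivalently, scaled $\chi^2_2$) distribution, which follows in one line from the factorial moment formula together with the trivial estimate $p! \le p^p$. Should one wish to avoid citing Lemma~\ref{lemma:subexponential}, a self-contained alternative is a direct Chernoff bound using the explicit moment generating function $\Expect e^{\lambda(|v_i|^2-1)} = e^{-\lambda}/(1-\lambda)$ for $\lambda < 1$, which after the elementary bound $\log\bigl(e^{-\lambda}/(1-\lambda)\bigr) \le \lambda^2/(1-\lambda)$ and optimization over $0 \le \lambda < 1$ reproduces the same two-regime (sub-Gaussian for small $t$, sub-exponential for large $t$) tail; routing through the cited corollary is simply shorter.
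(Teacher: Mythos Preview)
Your proof is correct and follows essentially the same route as the paper: write $\|\bv\|^2$ as a sum of i.i.d.\ centered sub-exponential variables and invoke Lemma~\ref{lemma:subexponential}. The only cosmetic difference is that the paper decomposes into $2m$ terms $\Re(v_i)^2-\tfrac12$ and $\Im(v_i)^2-\tfrac12$ and cites \cite[Lemma~5.14, Remark~5.18]{vershynin2010introduction} for the sub-exponential bound, whereas you group each pair into the single exponential variable $|v_i|^2$ and bound its $\psi_1$-norm directly via the moment formula --- both lead to the same conclusion.
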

\begin{proof}
We remark that $\|\bv\|^2-m=\sum_{i=1}^m((\Re(v_i)^2-1/2)+(\Im(v_i)^2-1/2))$, and both $\Re(v_i)$ and $\Im(v_i)$ are i.i.d. sampled from $N(0,\frac{1}{2})$. Since sub-gaussian squared is sub-exponential~\cite[Lemma 5.14]{vershynin2010introduction} with mean $1/2m$, and after centering, a sub-exponential distribution is still sub-exponential~\cite[Remark 5.18]{vershynin2010introduction}, $\Re(v_i)^2-\frac{1}{2}$ and $\Im(v_i)^2-\frac{1}{2}$ are i.i.d. sampled from a sub-exponential distribution with sub-exponential norm smaller than a constant $C$. Applying Lemma~\ref{lemma:subexponential}, Lemma~\ref{lemma:norm_gaussian} is proved.
\end{proof}
\begin{lemma}\label{lemma:tail}
For any $\bx\sim CN(0,\bI_{m\times m})$, $\Pr(\|\bx\|>t)\leq 2m\exp(-t^2/4m)$.
\end{lemma}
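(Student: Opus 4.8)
The plan is to control $\|\bx\|$ by its largest coordinate via a union bound and then invoke the scalar Gaussian tail. First I would note that if $|x_i|\le t/\sqrt m$ for every $i$, then $\|\bx\|^2=\sum_{i=1}^m|x_i|^2\le t^2$; hence $\{\|\bx\|>t\}\subseteq\bigcup_{i=1}^m\{|x_i|>t/\sqrt m\}$, and since the $x_i$ are i.i.d.\ $CN(0,1)$, a union bound gives
\[
\Pr(\|\bx\|>t)\le m\,\Pr\!\left(|x_1|>t/\sqrt m\right).
\]

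Next I would estimate this scalar tail. Writing $x_1=\real(x_1)+\iu\,\imag(x_1)$ with $\real(x_1),\imag(x_1)$ i.i.d.\ $N(0,1/2)$, we have $|x_1|^2=\real(x_1)^2+\imag(x_1)^2$, so the event $|x_1|>s$ forces $\real(x_1)^2>s^2/2$ or $\imag(x_1)^2>s^2/2$. Since $\sqrt2\,\real(x_1)$ is a standard normal, the elementary bound $\Pr(|g|>u)\le e^{-u^2/2}$ for $g\sim N(0,1)$ yields $\Pr(\real(x_1)^2>s^2/2)=\Pr(|\sqrt2\,\real(x_1)|>s)\le e^{-s^2/2}$, and likewise for the imaginary part; hence $\Pr(|x_1|>s)\le 2e^{-s^2/2}$. (Alternatively, $|x_1|^2$ is exponentially distributed with mean $1$, giving the sharper $\Pr(|x_1|>s)=e^{-s^2}$; either suffices.) Taking $s=t/\sqrt m$,
\[
\Pr(\|\bx\|>t)\le 2m\,e^{-t^2/2m}\le 2m\,e^{-t^2/4m},
\]
which is the claim.

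\textbf{Main obstacle.} There is essentially no obstacle here: this is a standard concentration fact, and the slack built into the statement (the leading $2$, and $t^2/4m$ in the exponent rather than $t^2/m$ or $t^2/2m$) leaves ample room, so the only thing to watch is the bookkeeping of the variance-$\tfrac12$ convention for the real and imaginary parts of $CN(0,1)$. One could instead derive the lemma from Lemma~\ref{lemma:norm_gaussian} by a Chernoff/MGF argument on $\|\bx\|^2$, but the coordinatewise union bound above is shorter and self-contained.
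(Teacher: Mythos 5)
Your proof is correct and follows essentially the same route as the paper: a union bound over the coordinates of $\bx$ combined with the scalar Gaussian tail bound for the variance-$\tfrac12$ real and imaginary parts (the paper unions directly over all $2m$ real components, which is the same bookkeeping). The slack in the stated constant and exponent absorbs the factor-of-two differences exactly as you note, so the argument goes through.
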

\begin{proof}
It follows from the classic tail bound: $\Pr(|N(0,1)|>t)\leq \exp(-t^2/2)$, and a union bound of all real components of imaginary components of each element of $\bx$, which are i.i.d. sampled from $N(0,1/2)$.
\end{proof}

\section{Discussions}
The current paper justifies the  convergence of alternating minimization algorithm with random initialization for phase
retrieval. Specifically, we demonstrate that it succeeds with $m/\log^3m>Mn^{1.5}\log^{0.5}n$ for some $M>0$. A future direction is to find a better sample complexity, possibly via more sophisticated arguments: empirically, the algorithm succeeds with $m>O(n)$.  It would also be interesting to compare the decoupling approach in this work and the leave-one-out approach in \cite{Chen2018}, both in phase retrieval and in broader settings.
\bibliographystyle{abbrv}
\bibliography{bib-online}

\end{document}